\documentclass[12pt,english]{article}

\usepackage[T1]{fontenc}
\usepackage[utf8]{inputenc}   %
\usepackage{babel}
\usepackage{microtype}

\usepackage{amsmath}
\usepackage{amssymb}
\usepackage{amsthm}
\usepackage{dsfont}

\usepackage{textcomp}

\usepackage{newtxtext}
\usepackage[cmintegrals,varg]{newtxmath} %

\usepackage{graphicx}
\usepackage{tikz}
\usetikzlibrary{decorations.markings}
\usetikzlibrary{arrows.meta} %

\theoremstyle{plain} %
\newtheorem{theorem}{Theorem}

\newtheorem{proposition}{Proposition}

\theoremstyle{definition} %

\theoremstyle{remark} %
\newtheorem{remark}{Remark}
\newtheorem*{remark*}{Remark} %

\makeatletter
\renewenvironment{proof}[1][\proofname]{\par
  \pushQED{\qed}%
  \normalfont \topsep6\p@\@plus6\p@\relax
  \trivlist
  \item[\hskip\labelsep\bfseries#1:]\ignorespaces
}{%
  \popQED\endtrivlist\@endpefalse
}
\makeatother

\usepackage{makecell}
\usepackage{arydshln}
\usepackage{tabularx}
\usepackage{booktabs}
\newcolumntype{Y}{>{\centering\arraybackslash}X}

\usepackage{siunitx}
\usepackage{comment}
\usepackage{listings}
\usepackage{xcolor}

\definecolor{verylightgray}{gray}{0.95}
\definecolor{CommentGreen}{rgb}{0.2,0.54,0.1}

\usepackage{algorithm}
\usepackage{algpseudocode}

\usepackage[authoryear]{natbib}

\usepackage{geometry}
\usepackage{setspace}
\usepackage[colorlinks,
            linkcolor=blue,
            anchorcolor=blue,
            citecolor=blue,
            urlcolor=blue]{hyperref}
\hypersetup{pdftitle={Bartlett Couplings of the Onion and Vine LKJ Samplers},
            pdfauthor={Peter Reinhard Hansen}}

\usepackage{placeins}

\newcommand{\repourl}{\url{https://github.com/reinhardhansen/BartlettLKJ}}
\newcommand{\repodoi}{\url{https://doi.org/10.5281/zenodo.22088155}}
\newcommand{\repocommit}{\texttt{22a1764}}
\newcommand{\repotag}{\texttt{v1.2}}

\newif\ifincludetests
\includeteststrue      %

\ifincludetests
  \newcommand{\xsec}[2]{Section~\ref{#1}}
  \newcommand{\xthm}[2]{Theorem~\ref{#1}}
  \newcommand{\xalg}[2]{Algorithm~\ref{#1}}
  \newcommand{\xtab}[2]{Table~\ref{#1}}
  \newcommand{\xeq}[2]{\eqref{#1}}
  \newcommand{\testslocation}{Appendix~\ref{app:tests}}
\else
  \newcommand{\xtab}[2]{Table~#2 of the supplementary material}
  \newcommand{\testslocation}{the supplementary material}
\fi

\makeatother

\title{Bartlett Couplings of the Onion and Vine LKJ Samplers}

\author{Peter Reinhard Hansen$^{\mathsection}$
\\[0.1cm] \small $^{\mathsection}$Department of Economics, University of North Carolina at Chapel Hill,
\\ \small 107 Gardner Hall, CB 3305, Chapel Hill, NC 27599-3305, United States
\\[0.1cm] \small Corresponding author: \href{mailto:hansen@unc.edu}{\texttt{hansen@unc.edu}}}

\date{\small \today\vspace{-10mm}}

\begin{document}

\maketitle

\begin{spacing}{1.0}
\begin{abstract}
The extended-onion and C-vine constructions of \citet{LewandowskiKurowickaJoe:2009} are standard methods for sampling from the $\operatorname{LKJ}_n(\eta)$ distribution on correlation matrices. We show that both arise from the simpler row-normalized Bartlett construction associated with the restricted-Wishart representation of \citet{WangWuChu:2018}, which exposes redundancies hidden in standard gamma-based implementations of the classical constructions. Two exact row-wise couplings establish this: the squared norm of the Gaussian vector supplying the onion's direction has exactly the Gamma law required for one component of the Beta radius, and the same vector, with one chi-squared variate, generates the entire row of mutually independent C-vine partial correlations with their required symmetric-Beta laws. We also show that, relative to flat off-diagonal measure, the LKJ family maximizes entropy at fixed expected log-determinant; the dual natural parameter is $\eta-1$. Under gamma-ratio accounting, normalized Bartlett, the onion, and the conventional symmetric-Beta C-vine require $n-1$, $2(n-1)$, and $n(n-1)$ Gamma-equivalent calls. Controlled benchmarks confirm a low-dimensional advantage over the onion implementation and a persistent advantage over the C-vine implementations examined; direct Bartlett normalization also avoids subtractive complements, moving the small-$\eta$ zero-diagonal threshold from machine-epsilon scale toward the subnormal range. The sampler is valid for every real $\eta>0$ and requires only standard normal and chi-squared variates.
\end{abstract}

{\small\textit{{\noindent}Keywords:}}{\small{} LKJ distribution, correlation matrix, Bartlett decomposition, Wishart distribution, Cholesky factor, random variate generation.}{\small\par}

{\small\textit{{\noindent}MSC 2020 subject classifications:}}{\small{} primary 62E15; secondary 65C10, 62H20.}{\small\par}
\end{spacing}

\newpage

\section{Introduction}
\label{sec:introduction}

Correlation matrices are central objects in multivariate statistics, econometrics, Bayesian hierarchical modeling, and machine learning. A standard distribution on the set of $n\times n$ correlation matrices, widely used as a prior in Bayesian modeling, is the LKJ distribution of
\citet{LewandowskiKurowickaJoe:2009}, whose density on the elliptope
$$
\mathcal{E}_n=\{C\in\mathbb{R}^{n\times n}:C=C^\prime,\ C\succeq0,\ C_{ii}=1\}
$$
is proportional to
$\det(C)^{\eta-1}$ with $\eta>0$; throughout, $n\geq2$.
The special case $\eta=1$ is the uniform distribution on $\mathcal{E}_n$. Because the positive semidefinite constraint is nonlinear and difficult to impose entrywise, practical use of the LKJ distribution depends on constructive samplers that generate valid correlation matrices by design.

\citet{LewandowskiKurowickaJoe:2009} gave two constructive samplers, building on earlier work on random correlation matrices by \citet{Joe:2006}, \citet{MarsagliaOlkin:1984}, and others. The onion, or extended-onion, method builds the Cholesky factor row by row, representing each row by a random direction on a sphere and an independent beta-distributed radius; it is the computational route used by \texttt{Distributions.jl} \citep{Besancon:2021}. The C-vine method instead parametrizes the correlation matrix by $n(n-1)/2$ algebraically independent partial correlations, drawn as independent symmetric beta variates; this is the route taken by Stan \citep{Carpenter:2017,StanFunctions:2026}, whose \texttt{lkj\_corr\_cholesky\_rng} draws $n(n-1)/2$ canonical partial correlations as independent shifted symmetric-Beta variates and converts them to a Cholesky factor.\footnote{Files \texttt{stan/math/prim/prob/lkj\_corr\_cholesky\_rng.hpp} and \texttt{beta\_rng.hpp} in Stan Math v5.3.0 (Stan 2.39.0); \texttt{read\_corr\_L} is documented there as a Cholesky-factor implementation of the C-vine method. Its \texttt{beta\_rng} consumes two Gamma draws for every symmetric-Beta input: for shapes exceeding one it returns their direct ratio, while for shapes at most one it uses a log-space augmentation with two additional uniform draws. The two-Gamma count of Table~\ref{tab:rng_ledger} therefore matches Stan's Gamma-call count, although the gamma-ratio cost model is not a literal description of the small-shape branch, which is the branch taken by the last tree, where the Beta shape is $\eta$. The two classical routes are thus instantiated by the two major software ecosystems considered here.} A related parametrization by freely varying partial autocorrelations goes back to \citet{Barndorff-NielsenSchou:1973} in the stationary autoregressive setting, and \citet{JoeKurowicka:2026} have recently extended this route beyond the LKJ class, to correlation matrices with asymmetric or positive margins and targeted moments. Both constructions are exact and cost $O(n^2)$ to produce the Cholesky factor; forming $C=LL^\prime$ from it costs an additional $O(n^3)$.

There is a third route. The restricted-Wishart equivalence of \citet{WangWuChu:2018} shows that, setting $\nu=n+2\eta-1$, the correlation matrix associated with $W\sim W_n(\nu,I_n)$ has the $\operatorname{LKJ}_n(\eta)$ distribution, so drawing the Bartlett factor $A$ of $W=AA^\prime$ and normalizing each row of $A$ gives the Cholesky factor of an $\operatorname{LKJ}_n(\eta)$ matrix directly; \citet{WangWuChu:2018} also document empirically that this is faster than the extended-onion method at low and moderate dimensions. This identity is used by \citet{Hansen:CorrelationMatrix} to study the high-dimensional geometry of the elliptope. These constructions are typically presented as distinct methods rather than as coupled representations of the same underlying random quantities; the partial-correlation and Wishart routes appear as separate entries in surveys of random correlation matrix generation \citep[Sections~4.5 and~4.6]{ArchakovHansenLuo-RandomCorr:2024}, see also \citet{Pourahmadi2011}.

We show that the third route is more than a competitor of the other two: it underlies both. We establish this by two exact coupling theorems. Theorem~\ref{thm:coupling} couples the Bartlett row to the onion row: the Gaussian vector that supplies the onion's direction also carries a squared norm that is chi-squared distributed and independent of the direction; the onion discards it, and a gamma-ratio implementation then regenerates an independent copy of its law as a fresh Gamma draw, while the normalized-Bartlett sampler reuses it as one of the two Gamma components of the beta radius. Theorem~\ref{thm:vine_coupling} couples the same Bartlett row to the entire corresponding row of the C-vine: the partial correlations $\rho_{ji;1\cdots j-1}$, $j<i$, are recovered as $Z_{ij}$ divided by the norm of the remaining tail of the row, and these ratios are mutually independent with exactly the symmetric-Beta laws of \citet{LewandowskiKurowickaJoe:2009}. Thus one Gaussian vector and one chi-squared variate per row generate, simultaneously and exactly, the onion's radius--direction pair and the vine's full set of row parameters; under these couplings both classical constructions are deterministic functionals of the Bartlett factor, and each regenerates, under the gamma-ratio representation, quantities that the Bartlett route reuses during generation.

The division between known and new is the following. The restricted-Wishart equivalence, the independent symmetric-Beta laws of the C-vine parameters and their product map to the Cholesky factor, the Cholesky-factor density, and the beta--gamma independence underlying stick-breaking constructions are all known. The contributions of this paper are the explicit realization of all the independent vine inputs from the nested tails of a single Bartlett row (defined in Section~\ref{sec:vine_equivalence}), the mutual-independence proof in that representation, the resulting exact pathwise couplings of both classical samplers to the normalized Bartlett factor, their distribution-call and numerical consequences, and the maximum-entropy characterization of Section~\ref{sec:characterizations}. \citet{WangWuChu:2018} reported the low-dimensional speed advantage of the Wishart route empirically; the couplings explain its algorithmic component.

The computational implication follows from counting the scalar generator calls (normal, beta, chi-squared) that each sampler makes. Under the gamma-ratio representation of a beta variate, the ratio $X/(X+Y)$ of two independent Gammas, the onion consumes $2(n-1)$ Gamma-equivalent draws, the conventional symmetric-Beta vine $n(n-1)$, and the normalized-Bartlett sampler $n-1$. Against the onion the saving is one Gamma-equivalent draw per row and vanishes as $n\to\infty$ relative to the total cost; against the conventional vine the aggregate saving is $(n-1)^2$ Gamma-equivalent draws, approximately two per partial correlation, and it persists, with limiting ratio $2r$ in the distribution-call model, for $r$ the Gamma-to-normal cost ratio (or $1+r$ under the representation of Theorem~\ref{thm:vine_coupling}; see Section~\ref{sec:ledger}). The count is an accounting device tied to the gamma-ratio representation (Section~\ref{sec:ledger}); Section~\ref{sec:numerical} therefore separates controlled experiments that isolate the mechanisms from a package-level benchmark. The couplings also explain a numerical asymmetry: the Bartlett route computes each diagonal entry from nonnegative quantities, without the subtractions $1-R^2$ used by the textbook formulations and by the implementations examined here, so its failure threshold at small $\eta$ is the subnormal range rather than machine epsilon. The Wishart-based analysis is specific to the determinant-power family: it does not carry over to the extended vine classes of \citet{JoeKurowicka:2026}, whose densities leave that family.

The paper proceeds as follows. Section~\ref{sec:identity} states the Wishart and Bartlett identities and the normalized-Bartlett algorithm. Section~\ref{sec:cholesky_density} derives the Cholesky-factor density directly from the normalized Bartlett rows (Proposition~\ref{prop:cholesky_density}). Sections~\ref{sec:onion_equivalence} and~\ref{sec:vine_equivalence} establish the onion coupling (Theorem~\ref{thm:coupling}) and the vine coupling (Theorem~\ref{thm:vine_coupling}). Section~\ref{sec:characterizations} records three structural descriptions of the family, of which the maximum-entropy characterization appears to be new. Section~\ref{sec:ledger} gives the distribution-call ledger and cost model, the in-place implementation, and the numerical behavior of the three samplers. Section~\ref{sec:numerical} reports benchmarks; implementation tests are collected in \testslocation.

\section{The Restricted-Wishart Identity and the Bartlett Sampler}
\label{sec:identity}

The following proposition states the LKJ--restricted-Wishart equivalence of \citet{WangWuChu:2018} in the notation used in this paper; recall the elliptope $\mathcal{E}_n$ defined in Section~\ref{sec:introduction}.

\begin{proposition}[LKJ--restricted-Wishart equivalence, \citealp{WangWuChu:2018}]
\label{prop:lkj_wishart}
Let $\nu>n-1$ and let $W\sim W_n(\nu,I_n)$. Define
$$
D=\operatorname{diag}(W_{11},\ldots,W_{nn}),\qquad
C=D^{-1/2}WD^{-1/2}.
$$
Then $C$ has density proportional to
$\det(C)^{(\nu-n-1)/2}$, for $C\in\mathcal{E}_n$. Equivalently, $C\sim\operatorname{LKJ}_n(\eta)$ with
$\nu=n+2\eta-1$.
\end{proposition}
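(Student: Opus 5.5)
The plan is a change of variables from $W$ to the pair $(D,C)$, followed by integrating out the diagonal scales. Since $\nu>n-1$, $W$ is almost surely positive definite, and its Wishart density on the cone of positive definite matrices is
$$
f_W(W)\propto \det(W)^{(\nu-n-1)/2}\exp\bigl(-\tfrac12\operatorname{tr}W\bigr).
$$
The map $W\mapsto(d,C)$, with $d=(W_{11},\ldots,W_{nn})$ and $C=D^{-1/2}WD^{-1/2}$, is a bijection onto $(0,\infty)^n\times\operatorname{int}\mathcal{E}_n$. Its inverse is $W_{ij}=\sqrt{d_id_j}\,C_{ij}$ for $i<j$ and $W_{ii}=d_i$. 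The boundary of $\mathcal{E}_n$ carries no mass, so nothing is lost by working on the interior.

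First I compute the Jacobian, using the $n$ diagonal coordinates $d_i$ and the $n(n-1)/2$ off-diagonal coordinates $C_{ij}$, $i<j$. For fixed $d$, the off-diagonal part of the map is a linear scaling of $C_{ij}$ by $\sqrt{d_id_j}$, and the diagonal coordinates map identically. The Jacobian matrix is therefore block triangular, with determinant
$$
\prod_{i<j}\sqrt{d_id_j}=\prod_i d_i^{(n-1)/2},
$$
since each index appears in $n-1$ pairs.

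Next I substitute into the density. We have $\det W=\det(C)\prod_i d_i$ and $\operatorname{tr}W=\sum_i d_i$. The joint density of $(d,C)$ is then proportional to
$$
\det(C)^{(\nu-n-1)/2}\prod_{i=1}^n d_i^{(\nu-n-1)/2+(n-1)/2}e^{-d_i/2}
=\det(C)^{(\nu-n-1)/2}\prod_{i=1}^n d_i^{\nu/2-1}e^{-d_i/2}.$$
This factorizes. Hence $C$ is independent of $d$, the $d_i$ are i.i.d.\ $\chi^2_\nu$, and integrating out $d$ leaves a marginal density for $C$ proportional to $\det(C)^{(\nu-n-1)/2}$ on $\mathcal{E}_n$. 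Normalizability is automatic, because the marginal of a probability density is a density. Finally, $(\nu-n-1)/2=\eta-1$ exactly when $\nu=n+2\eta-1$, and the condition $\eta>0$ corresponds to $\nu>n-1$. This gives the LKJ identification.

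The main obstacle is the Jacobian: one must confirm that it depends only on $d$ and not on $C$. That holds because the parametrization treats the diagonal of $W$ as free coordinates, so the map has no coupling through $C$. I would verify this with the explicit block-triangular structure, noting that $\partial W_{ij}/\partial d_k$ may be nonzero but sits in the off-diagonal block, which does not affect the determinant. A secondary point is that $\operatorname{tr}W$ depends only on $d$, and this is why the exponential factor does not interact with $C$. That is the step where the choice of identity scale matrix $I_n$ is essential.
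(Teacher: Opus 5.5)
Your proof is correct and follows the paper's argument: change variables from $W$ to the diagonal scales and the off-diagonal entries of $C$, use $\det W=\det(C)\prod_i W_{ii}$ and the fact that $\operatorname{tr}W$ depends only on the scales, and note that the block-triangular Jacobian is free of $C$, so the joint density factorizes and the marginal of $C$ is proportional to $\det(C)^{(\nu-n-1)/2}$. The only difference is cosmetic: the paper parametrizes the scales by $\sigma_i=W_{ii}^{1/2}$, giving a Jacobian proportional to $\prod_i\sigma_i^{n}$ and radial factor $\sigma_i^{\nu-1}e^{-\sigma_i^2/2}$, whereas you use $d_i=W_{ii}$ directly, with Jacobian $\prod_i d_i^{(n-1)/2}$; your choice has the small advantage of displaying the $\chi^2_\nu$ law of the discarded diagonal immediately.
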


The corresponding Cholesky-factor sampler follows from the Bartlett decomposition, which holds for every real degree of freedom $\nu>n-1$ \citep{Bartlett:1939}; see \citet[Theorem~3.2.14]{Muirhead:1982} for the real-degree statement. With $\nu=n+2\eta-1$, draw a lower-triangular matrix $A$ with independent entries $A_{ij}\sim N(0,1)$ for $i>j$ and $A_{ii}^2\sim\chi^2_{\nu-i+1}$; then $W=AA^\prime$ has the $W_n(\nu,I_n)$ distribution. Standardizing $W$ to a correlation matrix is the restricted-Wishart sampler of \citet{WangWuChu:2018}; here we instead normalize the factor $A$ directly, which produces the Cholesky factor of the LKJ draw without forming the dense matrix $W$. Since $W_{ii}=\sum_{j=1}^i A_{ij}^2$, the Cholesky factor of the associated correlation matrix is obtained by normalizing each row of $A$:
$$
L_{ij}=\frac{A_{ij}}{\bigl(\sum_{k=1}^i A_{ik}^2\bigr)^{1/2}},\qquad 1\leq j\leq i,
$$
and $C=LL^\prime\sim\operatorname{LKJ}_n(\eta)$.

Equivalently, writing the sampler row by row: set $L_{11}=1$, and for $i=2,\ldots,n$ draw $Z_i\sim N_{i-1}(0,I_{i-1})$ and $G_i\sim\chi^2_{n+2\eta-i}$ independently. Setting $S_i=(Z_i^\prime Z_i+G_i)^{1/2}$, $L_{i,1:i-1}=Z_i^\prime/S_i$, and $L_{ii}=G_i^{1/2}/S_i=\{G_i/(Z_i^\prime Z_i+G_i)\}^{1/2}$, with all above-diagonal entries zero, gives
$$
L=\begin{pmatrix}
1 & 0 & \cdots & 0\\
Z_{2,1}/S_2 & G_2^{1/2}/S_2 & \cdots & 0\\
\vdots & \ddots & \ddots & \vdots\\
Z_{n,1}/S_n  & \cdots & Z_{n,n-1}/S_n & G_n^{1/2}/S_n
\end{pmatrix},
\qquad
LL^\prime\sim\operatorname{LKJ}_n(\eta).
$$

\begin{algorithm}[H]
\caption{Normalized-Bartlett $\operatorname{LKJ}_n(\eta)$ Cholesky sampler}
\label{alg:bartlett}
\begin{algorithmic}[1]
\State Set $L_{11}=1$.
\For{$i=2,\ldots,n$}
  \State Draw $Z_i\sim N_{i-1}(0,I_{i-1})$ and $G_i\sim\chi^2_{n+2\eta-i}$ independently.
  \State Set $S_i=\bigl(Z_i^\prime Z_i+G_i\bigr)^{1/2}$.
  \State Set $L_{i,1:i-1}=Z_i^\prime/S_i$ and $L_{ii}=G_i^{1/2}/S_i$.
\EndFor
\State \Return $L$ \Comment{$LL^\prime\sim\operatorname{LKJ}_n(\eta)$}
\end{algorithmic}
\end{algorithm}

This formulation shows explicitly why the sampler is valid for non-integer $\eta$: the last row requires $G_n\sim\chi^2_{2\eta}$, so the only condition is $2\eta>0$, precisely the LKJ range. Non-integer degrees of freedom are handled through $\chi^2_a\equiv\operatorname{Gamma}(a/2,2)$, where $\operatorname{Gamma}(\alpha,\theta)$ denotes the Gamma distribution with shape $\alpha$ and scale $\theta$ throughout. An in-place variant of Algorithm~\ref{alg:bartlett} is given in Section~\ref{sec:ledger}.

\begin{proof}[Proof of Proposition~\ref{prop:lkj_wishart}]
The Wishart density is proportional to $\det(W)^{(\nu-n-1)/2}\allowbreak\exp\{-\operatorname{tr}(W)/2\}$ on the positive definite cone. Write $W_{ij}=\sigma_i\sigma_j C_{ij}$ with $\sigma_i^2=W_{ii}$; then $\det(W)=\det(C)\prod_{i=1}^n\sigma_i^2$ and, since $C_{ii}=1$, $\operatorname{tr}(W)=\sum_{i=1}^n W_{ii}=\sum_{i=1}^n\sigma_i^2$, so the exponential factor depends only on $(\sigma_1,\ldots,\sigma_n)$. The absolute Jacobian determinant of the map $(\sigma_1,\ldots,\sigma_n,\{C_{ij}:i<j\})\mapsto W$ is proportional to $\prod_{i=1}^n\sigma_i^{n}$. The joint density therefore factorizes as
$$
\det(C)^{(\nu-n-1)/2}\cdot\prod_{i=1}^n\bigl[\sigma_i^{\nu-1}e^{-\sigma_i^2/2}\bigr],
$$
so the marginal density of $C$ is proportional to $\det(C)^{(\nu-n-1)/2}$ on $\mathcal{E}_n$. Setting $(\nu-n-1)/2=\eta-1$ gives $\nu=n+2\eta-1$.
\end{proof}

\section{LKJ Cholesky Factors from Normalized Bartlett Rows}
\label{sec:cholesky_density}

Most implementations of the LKJ distribution sample and store the Cholesky factor $L$ rather than the correlation matrix $C=LL^\prime$; Stan's \texttt{lkj\_corr\_cholesky} \citep{Carpenter:2017} and the \texttt{LKJCholesky} type in \texttt{Distributions.jl} \citep{Besancon:2021} are two examples. This section derives the induced density of $L$ directly from the Bartlett variables $(Z_i,G_i)$. The density itself is standard (it is the one used by Stan), but the direct normalized-Bartlett derivation exposes the independent row structure needed for the coupling in Section~\ref{sec:onion_equivalence}: it never forms $C$, and the beta-radius/uniform-direction structure that the onion construction takes as its point of departure emerges here as a consequence.

Parametrize $L$ by its strictly-lower-triangular entries: writing $x_i=L_{i,1:i-1}^\prime\in\mathbb{R}^{i-1}$, the unit row norms force $L_{ii}=(1-x_i^\prime x_i)^{1/2}$, so $L$ ranges over the product of open unit balls $\prod_{i=2}^n B^{i-1}$, where $B^{d}=\{x\in\mathbb{R}^{d}:x^\prime x<1\}$. The same ball-constrained structure, with $(1-x^\prime x)^{1/2}$ as the residual weight, arises for dynamic factor loadings in correlation models; \citet{TongHansen:2025FactorGARCH} develop a variation-free, direction-preserving Fisher parametrization of such vectors in that context.

\begin{proposition}[Cholesky-factor density]
\label{prop:cholesky_density}
Let $L$ be generated by Algorithm~\ref{alg:bartlett} with parameter $\eta>0$. Then the rows of $L$ are independent, and the joint density of $(x_2,\ldots,x_n)$ with respect to Lebesgue measure on $\prod_{i=2}^n B^{i-1}$ is
$$
p(L)=\prod_{i=2}^n
\frac{\Gamma\bigl(\eta+\frac{n-1}{2}\bigr)}{\pi^{(i-1)/2}\Gamma\bigl(\eta+\frac{n-i}{2}\bigr)}
L_{ii}^{n+2\eta-i-2}.
$$
\end{proposition}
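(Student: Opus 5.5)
Independence of the rows is immediate from Algorithm~\ref{alg:bartlett}: row $i$ of $L$ is a deterministic function of $(Z_i,G_i)$ alone, and the pairs $(Z_i,G_i)$, $i=2,\ldots,n$, are mutually independent. It therefore suffices to compute, for a fixed $i$, the density of $x_i=Z_i/S_i$ on $B^{i-1}$ and multiply. Write $d=i-1$ and $a=n+2\eta-i$, so that $Z_i\sim N_d(0,I_d)$ and $G_i\sim\chi^2_a$ independently. The joint density of $(Z_i,G_i)$ is
$$
(2\pi)^{-d/2}e^{-z^\prime z/2}\cdot\frac{g^{a/2-1}e^{-g/2}}{2^{a/2}\Gamma(a/2)},\qquad z\in\mathbb{R}^d,\ g>0 .
$$

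I would then change variables from $(z,g)$ to $(x,s)$, where $s^2=z^\prime z+g$ and $x=z/s$. The inverse map is $z=sx$ and $g=s^2(1-x^\prime x)$, which is a bijection from $B^d\times(0,\infty)$ onto $\mathbb{R}^d\times(0,\infty)$. The Jacobian matrix of $(x,s)\mapsto(z,g)$ has block form with $\partial z/\partial x=sI_d$, $\partial z/\partial s=x$, $\partial g/\partial x=-2s^2x^\prime$ and $\partial g/\partial s=2s(1-x^\prime x)$. The Schur-complement formula gives the determinant
$$
s^d\bigl[2s(1-x^\prime x)+2s\,x^\prime x\bigr]=2s^{d+1}.
$$
Substituting, and using $z^\prime z+g=s^2$, the joint density of $(x,s)$ is proportional to
$$
(1-x^\prime x)^{a/2-1}\,s^{a+d-1}e^{-s^2/2},
$$
which factorizes. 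Hence $x_i$ is independent of $S_i$, and its density is proportional to $(1-x^\prime x)^{a/2-1}=L_{ii}^{a-2}=L_{ii}^{n+2\eta-i-2}$. This is the claimed power.

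The main bookkeeping step is the normalizing constant. I would obtain it either by integrating out $s$ explicitly or, more cleanly, by using the standard ball integral
$$
\int_{B^d}(1-x^\prime x)^{\beta-1}\,dx=\pi^{d/2}\,\frac{\Gamma(\beta)}{\Gamma(\beta+d/2)},
$$
which follows from polar coordinates and a Beta integral. Here $\beta=a/2=\eta+(n-i)/2$ and $\beta+d/2=\eta+(n-1)/2$. The reciprocal of this integral is
$$
\frac{\Gamma\bigl(\eta+\frac{n-1}{2}\bigr)}{\pi^{(i-1)/2}\,\Gamma\bigl(\eta+\frac{n-i}{2}\bigr)},
$$
which matches the stated factor. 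The product over $i$ then gives $p(L)$.

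As a consistency check, the $s$-integral $\int_0^\infty s^{a+d-1}e^{-s^2/2}\,ds=2^{(a+d)/2-1}\Gamma\bigl((a+d)/2\bigr)$ can be combined with the constants $(2\pi)^{-d/2}\,2^{-a/2}/\Gamma(a/2)$ and the Jacobian factor $2$. The powers of $2$ cancel, and the same constant results. The positivity condition $a=n+2\eta-i>0$ for all $i\le n$ reduces to $\eta>0$, so the construction is valid on the whole LKJ range.
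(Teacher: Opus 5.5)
Your proof is correct and follows essentially the paper's argument: you separate the row $x_i=Z_i/S_i$ from the radius $S_i$, read off a factorized density $\propto L_{ii}^{n+2\eta-i-2}\,s^{a+d-1}e^{-s^2/2}$ that gives both independence from $S_i$ and the stated power, normalize with the same ball integral, and get independence across rows from the disjoint Bartlett inputs. The only difference is in how the change of variables is done. The paper passes to $(Z_i,G_i^{1/2})$, uses polar coordinates on the upper half-sphere, and then projects to the ball via $\sigma(du)=(1-x^\prime x)^{-1/2}dx$. You map $(z,g)\mapsto(x,s)$ directly with the Schur-complement Jacobian $2s^{d+1}$, which avoids surface measure, and your explicit $s$-integral gives an independent check of the normalizing constant.
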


\begin{proof}
Fix row $i$ and set $\nu_i=n+2\eta-i$ and $T_i=G_i^{1/2}$. The change of variables $g=t^2$ maps the $\chi^2_{\nu_i}$ density, proportional to $g^{\nu_i/2-1}e^{-g/2}$, to $t^{\nu_i-1}e^{-t^2/2}$, so the pair $(Z_i,T_i)$ has joint density proportional to
$$
t^{\nu_i-1}e^{-(z^\prime z+t^2)/2},
\qquad
(z,t)\in\mathbb{R}^{i-1}\times(0,\infty).
$$
Algorithm~\ref{alg:bartlett} sets the $i$th row of $L$ equal to $u=(z^\prime,t)^\prime/s$ with $s=(z^\prime z+t^2)^{1/2}$, i.e., the row is the angular part of $(Z_i,T_i)$ in the polar factorization of $\mathbb{R}^{i-1}\times(0,\infty)$, whose points we write as $su$ with $s>0$ and $u$ in the open upper half-sphere $\mathbb{S}^{i-1}_+=\{u\in \mathbb{S}^{i-1}:u_i>0\}$. Lebesgue measure factorizes as $dzdt=s^{i-1}ds\sigma(du)$, where $\sigma$ denotes surface measure on $\mathbb{S}^{i-1}$, and the joint density becomes
$$
s^{\nu_i+i-2}e^{-s^2/2}\times u_i^{\nu_i-1},
$$
where $u_i$, the last coordinate of $u$, equals $L_{ii}$. The radial and angular parts separate, so the row is independent of $S_i$ and has density proportional to $L_{ii}^{\nu_i-1}$ with respect to $\sigma$ on $\mathbb{S}^{i-1}_+$. Projecting the half-sphere onto its first $i-1$ coordinates gives $\sigma(du)=(1-x^\prime x)^{-1/2}dx=L_{ii}^{-1}dx$, hence a Lebesgue density on $B^{i-1}$ proportional to $L_{ii}^{\nu_i-2}=L_{ii}^{n+2\eta-i-2}$. The normalizing constant follows from
$$
\int_{B^{d}}(1-x^\prime x)^{b-1}dx=\frac{\pi^{d/2}\Gamma(b)}{\Gamma\bigl(b+\frac{d}{2}\bigr)},
$$
with $d=i-1$ and $b=\eta+(n-i)/2$, noting that $b+d/2=\eta+(n-1)/2$ for every row. Independence across rows holds because the pairs $(Z_i,G_i)$ occupy disjoint entries of the Bartlett factor.
\end{proof}

Four observations connect Proposition~\ref{prop:cholesky_density} to known results and implementations.
First, the density agrees with the standard LKJ Cholesky-factor density obtained by pushing $\det(C)^{\eta-1}$ through the map $C=LL^\prime$: since $\det(C)=\prod_{i=2}^n L_{ii}^2$ and the Jacobian of the change of variables from the strictly-lower-triangular entries of $L$ to those of $C$ is $\prod_{i=2}^n L_{ii}^{n-i}$, that route also gives $p(L)\propto\prod_{i=2}^n L_{ii}^{2(\eta-1)}L_{ii}^{n-i}=\prod_{i=2}^n L_{ii}^{n+2\eta-i-2}$, the density used by Stan's \texttt{lkj\_corr\_cholesky} \citep{Carpenter:2017}. The derivation above obtains it directly from Gaussian and chi-squared inputs, without forming $C$ or computing a Jacobian.

Second, rewriting the row density in radius/direction coordinates $x_i=ru$ shows that the row law factorizes: the density $\propto(1-r^2)^{\nu_i/2-1}$ against $r^{i-2}dr\sigma(du)$ separates into $R_i^2\sim\operatorname{Beta}((i-1)/2,\eta+(n-i)/2)$ and $U_i$ uniform on $\mathbb{S}^{i-2}$, independent; this is exactly the beta-radius/uniform-direction law of the extended-onion sampler. The onion construction is thus derived from the Bartlett representation, rather than merely matched to it; Theorem~\ref{thm:coupling} sharpens this into an exact algorithmic coupling.

Third, the radial factor shows $S_i^2=Q_i+G_i\sim\chi^2_{\nu_i+i-1}=\chi^2_{n+2\eta-1}=\chi^2_{\nu}$, independent of the row. The row normalization therefore discards exactly the Wishart diagonal entry $W_{ii}\sim\chi^2_\nu$, the same law for every row, which is the marginal information lost in passing from $W$ to $C$.

Fourth, the row parametrization of $L$ corresponds to the partial correlation C-vine: the entries of row $i$ are algebraic functions of the partial correlations $\{\rho_{ji;1\cdots j-1}:j<i\}$, which parametrize exactly one row each \citep[Appendix~A.4]{JoeKurowicka:2026}. The independence of rows in Proposition~\ref{prop:cholesky_density} is the distributional counterpart of the algebraic independence of the vine parameters; Theorem~\ref{thm:vine_coupling} in Section~\ref{sec:vine_equivalence} recovers the vine partial correlations and their Beta laws directly from the Gaussian and chi-squared inputs.

\section{Coupling with the Extended Onion}
\label{sec:onion_equivalence}

The second observation of Section~\ref{sec:cholesky_density} shows that each normalized-Bartlett row decomposes into a beta-distributed squared radius and an independent uniform direction on the sphere, precisely the law used by the extended-onion sampler. The relationship between the two samplers is sharper than distributional equivalence: the Gaussian vector $Z_i$ supplies two independent ingredients, its direction, which is uniform on the sphere, and its squared norm, which is chi-squared. The extended onion uses only the direction; the normalized-Bartlett sampler uses both.

\begin{theorem}[Exact row-wise coupling]
\label{thm:coupling}
For each row $i=2,\ldots,n$, let $Z_i\sim N_{i-1}(0,I_{i-1})$ and $G_i\sim\chi^2_{n+2\eta-i}$ be independent, and define
$$
Q_i=Z_i^\prime Z_i,\qquad
U_i=\frac{Z_i}{Q_i^{1/2}},\qquad
R_i^2=\frac{Q_i}{Q_i+G_i},\qquad
S_i=(Q_i+G_i)^{1/2}.
$$
Then:
\begin{itemize}
\item[(i)] $U_i$ is uniform on the unit sphere $\mathbb{S}^{i-2}\subset\mathbb{R}^{i-1}$, $Q_i\sim\chi^2_{i-1}$, and $U_i$ and $Q_i$ are independent;
\item[(ii)] $R_i^2\sim\operatorname{Beta}\bigl(\frac{i-1}{2},\eta+\frac{n-i}{2}\bigr)$, independent of $U_i$, so the pair $(R_i,U_i)$ has exactly the beta-radius/uniform-direction law used by the extended-onion sampler for row $i$;
\item[(iii)] the extended-onion row constructed from the radius $R_i$ and direction $U_i$ coincides \emph{almost surely} with the normalized-Bartlett row of Algorithm~\ref{alg:bartlett}:
$$
R_iU_i^\prime=\frac{Z_i^\prime}{S_i}=L_{i,1:i-1},
\qquad
(1-R_i^2)^{1/2}=\frac{G_i^{1/2}}{S_i}=L_{ii}.
$$
\end{itemize}
The couplings are independent across rows, and the coupled onion input $(X_i,Y_i)=(Q_i,G_i)$ has the law $\operatorname{Gamma}((i-1)/2,2)\otimes\operatorname{Gamma}(\eta+(n-i)/2,2)$ required by the gamma-ratio construction of the beta radius. Consequently, the gamma-ratio onion sampler driven by $(Q_i,G_i,U_i)$ produces the Algorithm~\ref{alg:bartlett} output pathwise, row by row.
\end{theorem}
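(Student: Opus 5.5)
The argument rests on two classical facts: the polar decomposition of a standard Gaussian vector, and the beta--gamma independence lemma. Part (iii) then reduces to algebraic identities.

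For (i), I will write the density of $Z_i$ in polar coordinates $z=q^{1/2}u$ with $q>0$ and $u\in\mathbb{S}^{i-2}$. Since Lebesgue measure on $\mathbb{R}^{i-1}$ factorizes as $dz=\tfrac12 q^{(i-3)/2}\,dq\,\sigma(du)$, the density $\propto e^{-z^\prime z/2}$ becomes $q^{(i-1)/2-1}e^{-q/2}$ times a constant in $u$. This factorization gives three things at once:
\begin{itemize}
\item $U_i$ is uniform on $\mathbb{S}^{i-2}$;
\item $Q_i\sim\chi^2_{i-1}=\operatorname{Gamma}((i-1)/2,2)$;
\item $U_i$ and $Q_i$ are independent.
\end{itemize}
The event $Q_i=0$ has probability zero, so $U_i$ is defined almost surely. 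This is the same separation of radial and angular parts used in the proof of Proposition~\ref{prop:cholesky_density}, applied to $Z_i$ alone.

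For (ii), $G_i$ is independent of $Z_i$, so $(U_i,Q_i,G_i)$ are mutually independent by (i). I will then apply the beta--gamma lemma to $X=Q_i\sim\operatorname{Gamma}(a,2)$ and $Y=G_i\sim\operatorname{Gamma}(b,2)$, with $a=(i-1)/2$ and $b=(n+2\eta-i)/2=\eta+(n-i)/2$. The lemma uses the change of variables $(x,y)\mapsto(r=x/(x+y),\,s=x+y)$, whose Jacobian is $s$. The joint density then factors as $r^{a-1}(1-r)^{b-1}\cdot s^{a+b-1}e^{-s/2}$. Hence $R_i^2\sim\operatorname{Beta}(a,b)$, and $R_i^2$ is independent of $Q_i+G_i$. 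Because $R_i^2$ is a function of $(Q_i,G_i)$ only, it is independent of $U_i$. This matches the onion's row-$i$ law and agrees with the second observation after Proposition~\ref{prop:cholesky_density}. The shape of $G_i$ is positive for every $i\le n$ because $\eta>0$, which is the only place the parameter range enters.

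For (iii), the identities follow by substitution. First, $R_iU_i=\{Q_i/(Q_i+G_i)\}^{1/2}Z_i/Q_i^{1/2}=Z_i/S_i$. Second, $1-R_i^2=G_i/(Q_i+G_i)$, whose nonnegative square root is $G_i^{1/2}/S_i$. These hold on the full-measure event $\{Q_i>0\}$, which explains the ``almost surely'' in the statement.

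Independence across rows follows because the triples $(Z_i,G_i)$ are independent across $i$ by construction in Algorithm~\ref{alg:bartlett}. The statement about the gamma-ratio input $(X_i,Y_i)=(Q_i,G_i)$ restates the product law obtained in (i) and (ii). Feeding these inputs into the gamma-ratio onion gives $R_i$ and $U_i$, and (iii) shows that this output equals Algorithm~\ref{alg:bartlett} pathwise.

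The only step needing any care is (ii): the beta radius must be independent of the direction even though both are built from the same vector $Z_i$. I will handle this by establishing the three-way independence of $(U_i,Q_i,G_i)$ first, and only then forming $R_i^2$ from $(Q_i,G_i)$.
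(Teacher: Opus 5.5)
Your proof is correct and follows the paper's argument step for step. Spherical symmetry, which you make explicit through the polar factorization of Lebesgue measure, gives (i); the gamma-ratio identity applied to the independent pair $(Q_i,G_i)$, together with $U_i\perp(Q_i,G_i)$, gives (ii); direct substitution gives (iii); and disjointness of the Bartlett entries gives independence across rows. The only difference is that you derive the polar decomposition and the beta--gamma lemma from their Jacobians rather than citing them, which makes the argument self-contained without changing its structure.
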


\begin{proof}
Spherical symmetry of $Z_i$ implies that $U_i=Z_i/\|Z_i\|$ is uniform on $\mathbb{S}^{i-2}$ and independent of $Q_i=\|Z_i\|^2\sim\chi^2_{i-1}$, giving (i). In the scale-2 Gamma parametrization, $Q_i\sim\operatorname{Gamma}((i-1)/2,2)$ and $G_i\sim\operatorname{Gamma}(\eta+(n-i)/2,2)$ (the common scale cancels in the ratio), so the gamma-ratio identity gives $R_i^2=Q_i/(Q_i+G_i)\sim\operatorname{Beta}((i-1)/2,\eta+(n-i)/2)$; since $R_i^2$ is a function of $(Q_i,G_i)$, which is independent of $U_i$, (ii) follows. For (iii), $R_iU_i=(Q_i^{1/2}/S_i) (Z_i/Q_i^{1/2})=Z_i/S_i$ and $1-R_i^2=G_i/S_i^2$, which are the Algorithm~\ref{alg:bartlett} assignments. Independence across rows is as in Proposition~\ref{prop:cholesky_density}, so the joint law of the rows of $L$ is the product of the row-wise laws, consistent with the joint $\operatorname{LKJ}_n(\eta)$ law of Proposition~\ref{prop:lkj_wishart}.
\end{proof}

Theorem~\ref{thm:coupling} identifies the exact redundancy in the gamma-ratio implementation of the onion construction: the first Gamma component of the beta radius is already contained in the normal draws used for the direction. That implementation discards $\|Z_i\|$ in the normalization $U_i=Z_i/\|Z_i\|$ and then regenerates an independent copy of its law as a fresh Gamma draw; the normalized-Bartlett sampler reuses it, so only the second Gamma component $G_i$ must be drawn separately. Section~\ref{sec:ledger} makes the count precise, including the row-$2$ initialization used in practice, which differs from the row-wise description above.

\section{Coupling with the C-Vine}
\label{sec:vine_equivalence}

The C-vine sampler of \citet{LewandowskiKurowickaJoe:2009} parametrizes the correlation matrix by the partial correlations $\rho_{ji;1\cdots j-1}$, $1\leq j<i\leq n$, drawn independently with the symmetric law $\rho_{ji;1\cdots j-1}\overset{d}{=}2V-1$, $V\sim\operatorname{Beta}(\beta_j,\beta_j)$, where $\beta_j=\eta+(n-1-j)/2$ depends only on the tree index $j$. The Cholesky factor is an explicit product function of these parameters \citep[Appendix~A.4]{JoeKurowicka:2026}:
\begin{equation}
\label{eq:vine_cholesky}
L_{ij}=\rho_{ji;1\cdots j-1}\prod_{k=1}^{j-1}\bigl(1-\rho_{ki;1\cdots k-1}^2\bigr)^{1/2},
\qquad
L_{ii}=\prod_{k=1}^{i-1}\bigl(1-\rho_{ki;1\cdots k-1}^2\bigr)^{1/2},
\end{equation}
for $1\leq j<i\leq n$, and the parameters of row $i$ appear in no other row. The following theorem shows that the normalized-Bartlett row generates the entire vine row: each partial correlation is the corresponding normal deviate divided by the norm of the remaining tail of the row.

We first name the quantities involved. For each row $i=2,\ldots,n$, let $Z_i\sim N_{i-1}(0,I_{i-1})$ and $G_i\sim\chi^2_{n+2\eta-i}$ be independent, and define the \emph{tail sums} and \emph{normalized ratios}
\begin{equation}
\label{eq:tails}
T_{ij}=\sum_{k=j+1}^{i-1}Z_{ik}^2+G_i,
\quad j=0,1,\ldots,i-1,
\qquad
P_{ij}=\frac{Z_{ij}}{\bigl(Z_{ij}^2+T_{ij}\bigr)^{1/2}}=\frac{Z_{ij}}{T_{i,j-1}^{1/2}},
\quad j=1,\ldots,i-1.
\end{equation}
Thus $T_{ij}$ collects the squared entries of row $i$ lying beyond position $j$, together with the diagonal term $G_i$; the two extremes are $T_{i0}=S_i^2$, the whole row, and $T_{i,i-1}=G_i$. The tails are \emph{nested}, in the sense that $T_{i,j-1}=Z_{ij}^2+T_{ij}$: each is obtained from the next by restoring one entry, so consecutive tails share all but one term.

\begin{theorem}[Exact row-wise coupling with the C-vine]
\label{thm:vine_coupling}
Let $T_{ij}$ and $P_{ij}$ be as in \eqref{eq:tails}. Then, for each row $i=2,\ldots,n$:
\begin{itemize}
\item[(i)] the $P_{ij}$, $j=1,\ldots,i-1$, are mutually independent, and $P_{ij}\overset{d}{=}2V-1$ with $V\sim\operatorname{Beta}(\beta_j,\beta_j)$, $\beta_j=\eta+(n-1-j)/2$; this is exactly the tree-$j$ law of the C-vine sampler for $\operatorname{LKJ}_n(\eta)$;
\item[(ii)] the vine construction \eqref{eq:vine_cholesky} applied to $\rho_{ji;1\cdots j-1}=P_{ij}$ returns the normalized-Bartlett row \emph{almost surely}:
$$
P_{ij}\prod_{k=1}^{j-1}\bigl(1-P_{ik}^2\bigr)^{1/2}=\frac{Z_{ij}}{S_i}=L_{ij},
\qquad
\prod_{k=1}^{i-1}\bigl(1-P_{ik}^2\bigr)^{1/2}=\frac{G_i^{1/2}}{S_i}=L_{ii}.
$$
\end{itemize}
The couplings are independent across rows, so the array $\{P_{ij}\}$ has exactly the joint input law of the C-vine sampler, and the C-vine sampler driven by it produces the Algorithm~\ref{alg:bartlett} output pathwise.
\end{theorem}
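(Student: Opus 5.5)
We prove part (ii) first, since it is purely algebraic, and then obtain part (i) by peeling off one coordinate of the row at a time.

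\textbf{Part (ii).} The nested identity $T_{i,k-1}=Z_{ik}^2+T_{ik}$ gives
$$1-P_{ik}^2=\frac{T_{i,k-1}-Z_{ik}^2}{T_{i,k-1}}=\frac{T_{ik}}{T_{i,k-1}}.$$
The product over $k=1,\ldots,j-1$ therefore telescopes:
$$\prod_{k=1}^{j-1}\bigl(1-P_{ik}^2\bigr)^{1/2}=\bigl(T_{i,j-1}/T_{i0}\bigr)^{1/2}=T_{i,j-1}^{1/2}/S_i.$$
Multiplying by $P_{ij}=Z_{ij}/T_{i,j-1}^{1/2}$ yields $Z_{ij}/S_i=L_{ij}$. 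Taking the product up to $k=i-1$ instead gives $(T_{i,i-1}/T_{i0})^{1/2}=G_i^{1/2}/S_i=L_{ii}$. All denominators are almost surely positive because $G_i>0$ almost surely, so these identities hold almost surely.

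\textbf{Part (i), setup.} We use the chi-squared representation $G_i\sim\operatorname{Gamma}(\nu_i/2,2)$ with $\nu_i=n+2\eta-i$. Then $T_{ij}\sim\chi^2_{m_{ij}}$, where
$$m_{ij}=(i-1-j)+\nu_i=n+2\eta-1-j=2\beta_j,$$
so $T_{ij}\sim\operatorname{Gamma}(\beta_j,2)$. This is where the tree-dependent shape $\beta_j$ comes from: it depends on $j$ only, not on $i$.

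\textbf{Marginal law of $P_{ij}$.} Fix $j$. $Z_{ij}$ is independent of $T_{ij}$, and $P_{ij}=Z_{ij}/(Z_{ij}^2+T_{ij})^{1/2}$. Write $Z_{ij}=\epsilon|Z_{ij}|$ with a random sign $\epsilon$ independent of $Z_{ij}^2\sim\operatorname{Gamma}(1/2,2)$. By the gamma-ratio identity,
$$P_{ij}^2=\frac{Z_{ij}^2}{Z_{ij}^2+T_{ij}}\sim\operatorname{Beta}(1/2,\beta_j),$$
and $P_{ij}^2$ is independent of $\epsilon$. It then remains to check that if $X=2V-1$ with $V\sim\operatorname{Beta}(\beta,\beta)$, then $X$ is symmetric and $X^2\sim\operatorname{Beta}(1/2,\beta)$. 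This is a one-line density computation: the density of $X$ is proportional to $(1-x^2)^{\beta-1}$, and substituting $w=x^2$ gives a density proportional to $w^{-1/2}(1-w)^{\beta-1}$. A symmetric variable is determined by the law of its square, so $P_{ij}\overset{d}{=}2V-1$.

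\textbf{Mutual independence (main obstacle).} We argue by induction along the row using beta--gamma independence. By Lukacs' theorem, for independent Gammas with a common scale, the ratio $X/(X+Y)$ is independent of the sum $X+Y$. Apply this with $X=Z_{i1}^2$ and $Y=T_{i1}$. Together with the independent sign $\epsilon_1$ of $Z_{i1}$, this shows that $P_{i1}$ is a function of $(\epsilon_1,Z_{i1}^2/T_{i0})$. That pair is independent of $T_{i0}$, and also of the "shape" of the tail vector $(Z_{i2},\ldots,Z_{i,i-1},G_i)/T_{i1}^{1/2}$.

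It is cleaner to phrase this jointly. Conditional on nothing, $T_{i1}$ is independent of the normalized tail vector $(Z_{i,2:i-1},G_i^{1/2})/T_{i1}^{1/2}$; this is the polar separation used in the proof of Proposition~\ref{prop:cholesky_density}. Now $P_{i2},\ldots,P_{i,i-1}$ are scale-invariant functions of the tail $(Z_{i,2:i-1},G_i)$: every $P_{ik}$ with $k\ge2$ is unchanged if $Z_{i,2:i-1}$ and $G_i^{1/2}$ are scaled by a common constant. Hence $(P_{i2},\ldots,P_{i,i-1})$ is a function of the normalized tail vector alone. On the other hand, $P_{i1}$ depends only on $(Z_{i1},T_{i1})$, and $Z_{i1}$ is independent of the entire tail. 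Since $T_{i1}$ is independent of the normalized tail, it follows that $P_{i1}$ is independent of $(P_{i2},\ldots,P_{i,i-1})$.

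Iterating this with $j$ replaced by $j+1$ and the tail shortened by one coordinate gives mutual independence by induction. The delicate point is to verify the polar independence for the tail vector $(Z_{i,j+1:i-1},G_i^{1/2})$, whose last coordinate is a chi variable rather than a normal one. The joint density of the tail is proportional to $t^{\nu_i-1}e^{-(z'z+t^2)/2}$. This has exactly the product form of radial and angular factors shown in the proof of Proposition~\ref{prop:cholesky_density}, so the radius $T_{ij}^{1/2}$ is independent of the direction.

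\textbf{Independence across rows.} This follows as in Proposition~\ref{prop:cholesky_density}, because different rows use disjoint Bartlett entries. Combining this with parts (i) and (ii) gives the joint C-vine input law and the pathwise identity with the output of Algorithm~\ref{alg:bartlett}.
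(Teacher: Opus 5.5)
Your proof is correct. Part (ii), the Gamma shapes of the tails, and the symmetric-Beta identification match the paper's proof essentially step for step. The genuine difference is in the mutual-independence step.

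\textbf{The paper's route.} It works with the squared ratios $D_{ij}=P_{ij}^2$ and runs a backward induction. The hypothesis is that $D_{i,j+1},\ldots,D_{i,i-1}$ are mutually independent and jointly independent of the current tail $T_{ij}$. Each step uses only the two-variable beta--gamma fact that $Z_{ij}^2/(Z_{ij}^2+T_{ij})$ is independent of $Z_{ij}^2+T_{ij}=T_{i,j-1}$. The signs are then attached in a separate step via Gaussian symmetry.

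\textbf{Your route.} You note that $(P_{i,j+1},\ldots,P_{i,i-1})$ is a scale-invariant function of the tail vector $(Z_{i,j+1},\ldots,Z_{i,i-1},G_i^{1/2})$, hence a function of its direction alone. You then obtain, in one shot, the independence of that direction from its squared norm $T_{ij}$. This comes from the radial--angular factorization of the density $t^{\nu_i-1}e^{-(z'z+t^2)/2}$, the same computation as in Proposition~\ref{prop:cholesky_density}. Combined with $Z_{ij}\perp$ tail, this gives $P_{ij}\perp(P_{i,j+1},\ldots,P_{i,i-1})$ directly for the signed variables, and chaining over $j$ yields mutual independence.

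\textbf{What each buys.} Your argument treats signs and magnitudes together. It also ties the vine coupling explicitly to the half-sphere geometry of Proposition~\ref{prop:cholesky_density}. The paper's argument stays within scalar Gamma calculus and never needs a polar decomposition in dimension above two. As a by-product of its induction hypothesis, it also records that all the $D_{ij}$ are independent of $T_{i0}=S_i^2$.

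\textbf{Two cosmetic points.} The fact you attribute to Lukacs (independence of ratio and sum for Gammas with a common scale) is the elementary direction; Lukacs's theorem is the converse characterization of the Gamma law. Your first, superseded formulation of the normalized tail should read $G_i^{1/2}$ rather than $G_i$. Neither affects the argument: the ``cleaner'' paragraph, which carries the proof, uses neither.
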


For row $i=4$, for example, the theorem reads
$$
P_{43}=\frac{Z_{43}}{(Z_{43}^2+G_4)^{1/2}},
\qquad
P_{42}=\frac{Z_{42}}{(Z_{42}^2+Z_{43}^2+G_4)^{1/2}},
\qquad
P_{41}=\frac{Z_{41}}{(Z_{41}^2+Z_{42}^2+Z_{43}^2+G_4)^{1/2}},
$$
and the products in \eqref{eq:vine_cholesky} telescope because $1-P_{41}^2=(Z_{42}^2+Z_{43}^2+G_4)/S_4^2$ and so on down the row.

\begin{proof}
The squared entries $Z_{ij}^2\sim\operatorname{Gamma}(\tfrac12,2)$ and $G_i\sim\operatorname{Gamma}\bigl(\tfrac{n+2\eta-i}{2},2\bigr)$ are mutually independent, so each tail sum is Gamma with the summed shapes:
$$
T_{ij}\sim\operatorname{Gamma}\Bigl(\tfrac{i-1-j}{2}+\tfrac{n+2\eta-i}{2},2\Bigr)=\operatorname{Gamma}(\beta_j,2),
\qquad
\beta_j=\tfrac{n+2\eta-j-1}{2}=\eta+\tfrac{n-1-j}{2},
$$
which depends on $j$ only. Write $D_{ij}=P_{ij}^2=Z_{ij}^2/(Z_{ij}^2+T_{ij})$. Since $Z_{ij}^2$ and $T_{ij}$ are independent Gammas with common scale, the change of variables $(x_1,x_2)\mapsto(x_1/(x_1+x_2),x_1+x_2)$, whose Jacobian factorizes the joint density, gives $D_{ij}\sim\operatorname{Beta}(\tfrac12,\beta_j)$, independent of the sum $Z_{ij}^2+T_{ij}=T_{i,j-1}$. The mutual independence of $(D_{i1},\ldots,D_{i,i-1})$ now follows by backward induction on $j$, with hypothesis $H_j$: \emph{the variables $D_{i,j+1},\ldots,D_{i,i-1}$ are mutually independent and jointly independent of $T_{ij}$}. The base case $H_{i-1}$ is trivial. Suppose $H_j$ holds for some $j\in\{1,\ldots,i-1\}$. The fresh entry $Z_{ij}^2$ is independent of $(Z_{i,j+1}^2,\ldots,Z_{i,i-1}^2,G_i)$ and hence of $(D_{i,j+1},\ldots,D_{i,i-1},T_{ij})$, so the pair $(Z_{ij}^2,T_{ij})$ is jointly independent of $(D_{i,j+1},\ldots,D_{i,i-1})$. Both $D_{ij}$ and $T_{i,j-1}$ are functions of $(Z_{ij}^2,T_{ij})$, and the same change of variables gives $D_{ij}\perp T_{i,j-1}$; combining, $(D_{ij},T_{i,j-1})$ has independent components and is jointly independent of $(D_{i,j+1},\ldots,D_{i,i-1})$, which is $H_{j-1}$ with $D_{ij}$ adjoined. At $j=0$ all the $D_{ij}$ are mutually independent (and independent of $T_{i0}=S_i^2$).

For the signs, Gaussian symmetry makes $\operatorname{sign}(Z_{i1}),\ldots,\operatorname{sign}(Z_{i,i-1})$ i.i.d.\ uniform on $\{-1,1\}$ and independent of all squared magnitudes and of $G_i$; since $P_{ij}=\operatorname{sign}(Z_{ij})D_{ij}^{1/2}$, the $P_{ij}$ are mutually independent and symmetric with $P_{ij}^2\sim\operatorname{Beta}(\tfrac12,\beta_j)$. If $V\sim\operatorname{Beta}(\beta,\beta)$ then $2V-1$ is symmetric with $(2V-1)^2\sim\operatorname{Beta}(\tfrac12,\beta)$, and a distribution on $(-1,1)$ that is symmetric about zero with sign independent of magnitude is determined by the law of its square; hence $P_{ij}\overset{d}{=}2V-1$, proving (i).

For (ii), $1-P_{ij}^2=T_{ij}/T_{i,j-1}$, so the products telescope: $\prod_{k=1}^{j-1}(1-P_{ik}^2)=T_{i,j-1}/T_{i0}=T_{i,j-1}/S_i^2$. Therefore
$$
P_{ij}\prod_{k=1}^{j-1}\bigl(1-P_{ik}^2\bigr)^{1/2}
=\frac{Z_{ij}}{T_{i,j-1}^{1/2}}\cdot\frac{T_{i,j-1}^{1/2}}{S_i}
=\frac{Z_{ij}}{S_i},
\qquad
\prod_{k=1}^{i-1}\bigl(1-P_{ik}^2\bigr)^{1/2}=\frac{T_{i,i-1}^{1/2}}{S_i}=\frac{G_i^{1/2}}{S_i},
$$
matching Algorithm~\ref{alg:bartlett}. Independence across rows is again as in Proposition~\ref{prop:cholesky_density}.
\end{proof}

The coupling inverts explicitly. By the telescoping identity in the proof, $\prod_{k=1}^{j-1}(1-P_{ik}^2)=1-\sum_{k=1}^{j-1}L_{ik}^2$, so
$$
P_{ij}=\frac{L_{ij}}{\bigl(1-\sum_{k=1}^{j-1}L_{ik}^2\bigr)^{1/2}},
$$
and the vine parameters are recovered from the normalized factor alone, without access to the discarded scale $S_i$. The identity uses no property of the LKJ law, so it gives a direct algorithm for converting any positive-diagonal Cholesky factor of a positive definite correlation matrix into its C-vine partial correlations.

Theorem~\ref{thm:vine_coupling} has the following three implications. First, the coupling identifies the redundancy in the vine. Each partial correlation involves two independent Gamma-distributed variables: in the decomposition of the theorem, its $\chi^2_1$ numerator and its $\operatorname{Gamma}(\beta_j,2)$ tail; in the conventional symmetric-beta implementation, the two $\operatorname{Gamma}(\beta_j,2)$ components of $2V-1$. The two representations price differently. In the tail representation the numerator is supplied by the squared normal draw, so only the tail requires a Gamma call, and each parameter costs one normal plus one Gamma; in the conventional symmetric-Beta representation both components are drawn as Gammas, so each parameter costs two Gamma-equivalent calls. The coupling shows that the required Gamma components are the nested tail sums $T_{ij}=Z_{i,j+1}^2+T_{i,j+1}$, which overlap: consecutive parameters share all but one term. The fresh-tail vine implementation regenerates each tail independently from scratch; the Bartlett route draws only the increments, $i-1$ normals and a single chi-squared per row, and obtains every tail by subtraction-free accumulation.

Second, the Beta parameter $\beta_j$ depends only on the tree index $j$, not on the row $i$; this is the structural fact underlying the common tree-level distributions in \citet{LewandowskiKurowickaJoe:2009} and their moment recursions in \citet{JoeKurowicka:2026}. Under the coupling this is immediate: $T_{ij}$ has shape $\beta_j$ for every row $i$, since increasing $i$ by one adds a term of shape $1/2$ to the tail sum while reducing the shape of $G_i$ by $1/2$.

Third, the onion variables are functions of the vine variables: $R_i^2=1-\prod_{j<i}(1-P_{ij}^2)$ and $U_i^\prime=L_{i,1:i-1}/R_i$. Theorem~\ref{thm:coupling} is thus the aggregated form of Theorem~\ref{thm:vine_coupling}, in which the radius collects the whole row at once, while the vine coupling is the fully sequential form.

\section{Characterizations of the LKJ Family}
\label{sec:characterizations}

The results of Sections~\ref{sec:onion_equivalence} and~\ref{sec:vine_equivalence} lead to three complementary structural descriptions of the LKJ family. The first gives a variational interpretation of $\eta$, which follows from standard exponential-family duality, but we have not seen it stated in the existing literature. The second and third are known, and align directly with the vine and Wishart--Bartlett representations.

Expressed in its $d=n(n-1)/2$ free off-diagonal coordinates, $\mathcal{E}_n$ is a subset of the hypercube $[-1,1]^d$. All densities and differential entropies below are relative to Lebesgue measure $\lambda_d$ in those coordinates. Differential entropy is not invariant under reparametrization, so what follows is a statement about this flat off-diagonal reference measure, not a coordinate-free assertion about information on the elliptope.

\begin{proposition}[Maximum entropy]
\label{prop:maxent}
Let $\mathcal{P}$ be the set of probability distributions on $\mathcal{E}_n$ with a density with respect to $\lambda_d$, and for $P\in\mathcal{P}$ with density $f$ let $H(P)=-\int f\log fd\lambda_d=-\mathbb{E}_P[\log f]$ be the differential entropy. Define
$$
m(\eta)=\mathbb{E}_{\operatorname{LKJ}_n(\eta)}[\log\det C]
=\sum_{j=0}^{n-1}\psi\Bigl(\eta+\tfrac{j}{2}\Bigr)-n\psi\Bigl(\eta+\tfrac{n-1}{2}\Bigr),
$$
where $\psi$ is the digamma function. Then:
(i) for every $\eta>0$, $\operatorname{LKJ}_n(\eta)$ is the unique maximizer of $H(P)$ over $\{P\in\mathcal{P}:\mathbb{E}_P[\log\det C]=m(\eta)\}$;
(ii) $m$ is continuous and strictly increasing on $(0,\infty)$, with $m(\eta)\rightarrow-\infty$ as $\eta\rightarrow0$ and $m(\eta)\rightarrow0$ as $\eta\rightarrow\infty$, so every value in $(-\infty,0)$ is attained by exactly one $\eta$;
(iii) the uniform distribution, $\eta=1$, is the unique maximizer of $H(P)$ over all of $\mathcal{P}$, with no constraint imposed.
\end{proposition}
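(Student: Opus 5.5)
The plan is to use the Gibbs inequality for exponential families, compute $m(\eta)$ from the Bartlett representation, and then establish monotonicity and the limits of $m$ separately.

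\textbf{Step 1: part (i).} Write $f_\eta=c_n(\eta)\det(C)^{\eta-1}$ for the $\operatorname{LKJ}_n(\eta)$ density on $\mathcal{E}_n$ relative to $\lambda_d$. Take $P\in\mathcal{P}$ with density $f$ and $\mathbb{E}_P[\log\det C]=m(\eta)$. If $H(P)=-\infty$ there is nothing to prove. Otherwise, nonnegativity of Kullback--Leibler divergence gives
$$
0\le \mathrm{KL}(P\Vert \operatorname{LKJ}_n(\eta))=-H(P)-\log c_n(\eta)-(\eta-1)\mathbb{E}_P[\log\det C].
$$
Under the constraint the right-hand side equals $H(\operatorname{LKJ}_n(\eta))-H(P)$. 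Equality holds iff $f=f_\eta$ a.e., which gives uniqueness.

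One integrability point needs care. We need $\mathbb{E}_P[\log\det C]$ finite, which the constraint supplies. We also need $\log\det C\in L^1(\operatorname{LKJ}_n(\eta))$, which follows from the formula for $m(\eta)$ derived in Step 2. With both in hand, the manipulation is legitimate.

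\textbf{Step 2: formula for $m(\eta)$.} By Proposition~\ref{prop:cholesky_density} and Theorem~\ref{thm:coupling}, $\det C=\prod_{i=2}^n L_{ii}^2$ with $L_{ii}^2=G_i/(Q_i+G_i)\sim\operatorname{Beta}(\eta+(n-i)/2,(i-1)/2)$. Since $\mathbb{E}\log B=\psi(a)-\psi(a+b)$ for $B\sim\operatorname{Beta}(a,b)$, and here $a+b=\eta+(n-1)/2$ for every $i$, we get
$$
m(\eta)=\sum_{i=2}^n\Bigl[\psi\bigl(\eta+\tfrac{n-i}{2}\bigr)-\psi\bigl(\eta+\tfrac{n-1}{2}\bigr)\Bigr].
$$
Adding the $i=1$ term $\psi(\eta+\frac{n-1}{2})-\psi(\eta+\frac{n-1}{2})=0$ and reindexing with $j=n-i$ gives the stated formula. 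This shows $m(\eta)$ is finite, so $\log\det C$ is integrable as required in Step 1.

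\textbf{Step 3: part (ii), monotonicity.} This is the main obstacle. Each summand $\psi(\eta+j/2)-\psi(\eta+(n-1)/2)$ with $j<n-1$ has derivative $\psi'(\eta+j/2)-\psi'(\eta+(n-1)/2)$. This is strictly positive because the trigamma function is strictly decreasing. Hence $m$ is continuous and strictly increasing.

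Alternatively, exponential-family theory gives $m'(\eta)=\operatorname{Var}_\eta(\log\det C)>0$, since $\log\det C$ is nonconstant. The summand route avoids justifying differentiation under the integral.

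\textbf{Step 3 continued: limits.}
\begin{itemize}
\item As $\eta\to0$, the $j=0$ term $\psi(\eta)\to-\infty$, while the other terms stay bounded. Hence $m(\eta)\to-\infty$.
\item As $\eta\to\infty$, $\psi(x+a)-\psi(x+b)\to0$ because $\psi(x)=\log x+O(1/x)$. Hence $m(\eta)\to0$.
\end{itemize}
By the intermediate value theorem together with strict monotonicity, every value in $(-\infty,0)$ is attained by exactly one $\eta$.

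\textbf{Step 4: part (iii).} $\mathcal{E}_n$ is a bounded set of finite positive $\lambda_d$-measure. Applying Gibbs' inequality against the uniform density $1/\lambda_d(\mathcal{E}_n)$ gives $H(P)\le\log\lambda_d(\mathcal{E}_n)$, with equality iff $P$ is uniform. The uniform distribution is $\operatorname{LKJ}_n(1)$, and this step needs no integrability conditions.
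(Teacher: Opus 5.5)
Your proposal is correct and follows the paper's proof essentially step for step: the Gibbs/KL inequality against $g_\eta$ for (i) and against the constant density $g_1$ for (iii), and the trigamma-monotonicity derivative plus digamma limits for (ii). The only differences are cosmetic. You compute $m(\eta)$ from the Beta laws of $L_{ii}^2$ (Theorem~\ref{thm:coupling}) rather than from the $\chi^2$ laws of $A_{ii}^2$ and $W_{ii}$, which is the same calculation after row normalization, with no $\log 2$ terms to cancel. You also spell out the integrability conditions that the paper leaves implicit.
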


\begin{proof}
Write $g_\eta(C)=Z_n(\eta)^{-1}\det(C)^{\eta-1}$ for the $\operatorname{LKJ}_n(\eta)$ density, where $Z_n(\eta)=\int_{\mathcal{E}_n}\det(C)^{\eta-1}d\lambda_d$. For any $P\in\mathcal{P}$ with density $f$ and $\mathbb{E}_P[\log\det C]=m(\eta)$, the divergence inequality gives
$$
0\leq D(f\Vert g_\eta)=\int f\log\frac{f}{g_\eta}d\lambda_d
=-H(P)+\log Z_n(\eta)-(\eta-1)m(\eta),
$$
with equality if and only if $f=g_\eta$ almost everywhere. Since the same identity holds with equality at $f=g_\eta$, the right-hand side is $H(\operatorname{LKJ}_n(\eta))-H(P)$, proving (i). For (iii), $g_1\equiv Z_n(1)^{-1}$ is constant, so $0\leq D(f\Vert g_1)=-H(P)+\log Z_n(1)$ holds for every $P\in\mathcal{P}$, with no constraint imposed; hence $H(P)\leq H(\operatorname{LKJ}_n(1))$, uniquely attained at $f=g_1$.

For (ii), differentiate the closed form: $m^\prime(\eta)=\sum_{j=0}^{n-1}\psi_1(\eta+\frac{j}{2})-n\psi_1(\eta+\frac{n-1}{2})>0$, since the trigamma function $\psi_1$ is strictly decreasing and every argument in the sum is at most $\eta+\frac{n-1}{2}$, strictly so for $j<n-1$; the inequality is strict because $n\geq2$. Equivalently, exponential-family duality gives $m(\eta)=\frac{d}{d\eta}\log Z_n(\eta)$ and $m^\prime(\eta)=\operatorname{var}_{\operatorname{LKJ}_n(\eta)}(\log\det C)>0$, differentiation under the integral being justified for $\eta>0$ by local domination. The closed form itself follows from Proposition~\ref{prop:lkj_wishart}: with $\nu=n+2\eta-1$ and $W\sim W_n(\nu,I_n)$ we have $\log\det C=\log\det W-\sum_i\log W_{ii}$, where $\det W=\prod_i A_{ii}^2$ with $A_{ii}^2\sim\chi^2_{\nu-i+1}$ by Bartlett, and $W_{ii}\sim\chi^2_\nu$; the $\log2$ terms cancel, leaving $\sum_{i=1}^n\psi(\frac{\nu-i+1}{2})-n\psi(\frac{\nu}{2})$, which is the stated expression. The vine factorization $\det C=\prod_e(1-\rho_e^2)$ \citep{KurowickaCooke:2003} with Theorem~\ref{thm:vine_coupling}(i) gives the equivalent form $\sum_{\ell=1}^{n-1}(n-\ell)\{\log4+2\psi(\beta_\ell)-2\psi(2\beta_\ell)\}$, $\beta_\ell=\eta+(n-1-\ell)/2$, and the agreement of the two checks the representations against each other. The limits follow from $\psi(\eta)\rightarrow-\infty$ as $\eta\rightarrow0$ and $\psi(\eta+a)-\psi(\eta+b)\rightarrow0$ as $\eta\rightarrow\infty$.
\end{proof}

The natural parameter $\eta-1$ is therefore the Lagrange multiplier dual to the expected log-determinant, and $\eta$ indexes that constraint one-to-one through $m$. Relative to the flat reference measure $\lambda_d$, $\operatorname{LKJ}_n(\eta)$ is the maximum-entropy distribution at fixed $\mathbb{E}[\log\det C]$; beyond the support and this single moment constraint, no further feature of the family is imposed.

The interest of this is that it supplies an interpretation of $\eta$ that the defining density does not. Written as $\det(C)^{\eta-1}$, the parameter is a shape index whose effect is normally learned by simulation: one draws matrices at several values and inspects the resulting marginals. Part~(ii) replaces that with an exact correspondence. Since $m$ is a continuous, strictly increasing bijection onto $(-\infty,0)$, every attainable value of $\mathbb{E}[\log\det C]$ is matched by exactly one $\eta$, so choosing $\eta$ and choosing an expected log-determinant are the same act, and the latter is directly interpretable: $\log\det C$ measures how far the matrix is from singular, and equals $2\sum_{i\geq2}\log L_{ii}$ in the Cholesky coordinates the samplers of this paper produce. Part~(iii) places the uniform law in the same picture as the endpoint at which no constraint is imposed at all. Together these give the LKJ family the usual justification of a default prior: among all distributions on $\mathcal{E}_n$ with a given expected log-determinant, it is the one that assumes least. At $\eta=1$ the formula for $m$ agrees with the digamma expression for the uniform law in \citet{Hansen:CorrelationMatrix}.

The second description is the vine characterization. Fix a regular vine on $n$ variables, let $\rho_e$ denote its partial correlations and $\ell(e)$ the tree index of edge $e$. Under $\operatorname{LKJ}_n(\eta)$ the $\rho_e$ are mutually independent with $\rho_e\overset{d}{=}2V-1$, $V\sim\operatorname{Beta}(\beta_{\ell(e)},\beta_{\ell(e)})$, and the statement holds for every choice of regular vine \citep[Theorem~1]{LewandowskiKurowickaJoe:2009}. It also characterizes the family: the map from vine partial correlations to $C$ is a bijection onto the open elliptope $\mathcal{E}_n^\circ$ \citep{BedfordCooke:2002,KurowickaCooke:2003}, so a distribution on $\mathcal{E}_n^\circ$ whose partial correlations along a single regular vine are mutually independent with these laws must be $\operatorname{LKJ}_n(\eta)$. Theorem~\ref{thm:vine_coupling} is the algorithmic form: for each Cholesky row, it realizes that row's independent symmetric-Beta C-vine inputs across all tree levels at once, from the nested tails of a single Gaussian row.

The third is the spherical representation. By Proposition~\ref{prop:lkj_wishart}, $C\sim\operatorname{LKJ}_n(\eta)$ exactly when $C$ is the correlation matrix of $W\sim W_n(\nu,I_n)$ with $\nu=n+2\eta-1$. For integer $\nu\geq n$ this takes a Gram form, $C\overset{d}{=}V^\prime V$ with independent columns $v_i=z_i/\|z_i\|$, where the $z_i$ are independent and spherically distributed on $\mathbb{R}^\nu$, possibly with different radial laws and with no atom at the origin; sphericity makes each $v_i$ uniform on the sphere whatever its radius, so the law of $C$ is free of the radial distributions. Through Bartlett factorization, Algorithm~\ref{alg:bartlett} provides the real-$\nu$ continuation of this Gaussian Gram construction. The invariance is exact rather than distributional, since $(Qv_i)^\prime(Qv_j)=v_i^\prime v_j$: a simultaneous rotation of the configuration leaves $C$ unchanged pointwise. It is therefore attached to the ambient sample space of the representation, not to the elliptope, where generic orthogonal conjugations fail to preserve the unit diagonal and the signed permutations, which do preserve it, are far too weak to single out the family.

Analytically, then, the family is the log-determinant exponential family on $\mathcal{E}_n$, equivalently the maximum-entropy family at fixed $\mathbb{E}[\log\det C]$ relative to $\lambda_d$; structurally it is the family with independent symmetric-Beta vine partial correlations; and, when $\nu=n+2\eta-1$ is an integer, it is the Gram matrix of independent spherical directions, with the Bartlett construction providing the continuation to all real $\nu>n-1$. The latter two align directly with the C-vine and normalized-Bartlett samplers.

\section{Sampling Cost and Numerical Stability}
\label{sec:ledger}

This section compares the scalar distribution calls made by the four constructions of Table~\ref{tab:rng_ledger}.

\begin{table}[H]
\centering
\caption{Representation-level distribution-call ledger for $\operatorname{LKJ}_n(\eta)$ Cholesky sampling. The extended-onion row describes the algorithm of \citet{LewandowskiKurowickaJoe:2009} as implemented in \texttt{Distributions.jl}, including the beta-only initialization of row 2. The Gamma-equivalent column applies the gamma-ratio accounting (beta $=2$, chi-squared $=1$) and is representation-specific.}
\label{tab:rng_ledger}
\begin{tabularx}{\textwidth}{lYYYY}
\toprule
Sampler & Normal & Beta & Chi-squared & Gamma-equivalent \\
\midrule
Extended onion & $\dfrac{n(n-1)}{2}-1$ & $n-1$ & $0$ & $2(n-1)$ \\[6pt]
C-vine, conventional & $0$ & $\dfrac{n(n-1)}{2}$ & $0$ & $n(n-1)$ \\[6pt]
C-vine, tail representation & $\dfrac{n(n-1)}{2}$ & $0$ & $\dfrac{n(n-1)}{2}$ & $\dfrac{n(n-1)}{2}$ \\[6pt]
Normalized Bartlett & $\dfrac{n(n-1)}{2}$ & $0$ & $n-1$ & $n-1$ \\
\bottomrule
\end{tabularx}
\end{table}

We use the following two accounting conventions: First, the ledger describes the extended-onion algorithm as specified by \citet[Section~3.2]{LewandowskiKurowickaJoe:2009} and implemented in \texttt{Distributions.jl},\footnote{File \texttt{src/cholesky/lkjcholesky.jl} in \texttt{Distributions.jl} v0.25.129, the version benchmarked in Section~\ref{sec:numerical}.} including its special initialization of row~$2$: that row is generated as $L_{21}=2V-1$ with $V\sim\operatorname{Beta}(\beta_0,\beta_0)$, $\beta_0=\eta+(n-2)/2$, which uses one beta draw and no normal draw. (This is distributionally consistent with the row-wise description of Section~\ref{sec:onion_equivalence}: if $V\sim\operatorname{Beta}(\beta_0,\beta_0)$ then $2V-1$ is symmetric with $(2V-1)^2\sim\operatorname{Beta}(1/2,\beta_0)$, matching a random sign times the beta radius.) Second, a chi-squared draw is counted as one Gamma-equivalent draw, via $\chi^2_a\equiv\operatorname{Gamma}(a/2,2)$, and a beta draw as two, via the gamma-ratio representation
$$
\operatorname{Beta}(\alpha,\beta)
\equiv
\frac{X}{X+Y},
\qquad
X\sim\operatorname{Gamma}(\alpha,1),
\qquad
Y\sim\operatorname{Gamma}(\beta,1),
$$
with $X$ and $Y$ independent.

A simple cost model prices these calls. Let $c_N$ and $c_\Gamma$ denote the average costs of a standard normal and a Gamma-equivalent draw under the gamma-ratio representation, and write $r=c_\Gamma/c_N$. The RNG-only model costs, retaining only scalar distribution calls, are
$$
\widetilde{T}_{\operatorname{onion}}(n):=\Bigl(\tfrac{n(n-1)}{2}-1\Bigr)c_N+2(n-1)c_\Gamma,
\qquad
\widetilde{T}_{\operatorname{vine}}(n):=n(n-1)c_\Gamma,
$$
$$
\widetilde{T}_{\operatorname{vine}}^{\operatorname{tail}}(n):=\tfrac{n(n-1)}{2}(c_N+c_\Gamma),
\qquad
\widetilde{T}_{\operatorname{Bartlett}}(n):=\tfrac{n(n-1)}{2}c_N+(n-1)c_\Gamma .
$$

\begin{proposition}[Distribution-call counts and cost ratios]
\label{prop:rng_ledger}
Under the accounting conventions above:
\begin{itemize}
\item[(i)] the distribution-call counts of the four constructions are as in Table~\ref{tab:rng_ledger};
\item[(ii)] under the cost model, the speed ratios relative to normalized Bartlett, $S=\widetilde{T}/\widetilde{T}_{\operatorname{Bartlett}}$, are
$$
S_{\operatorname{onion}}(n;r)=\frac{n+4r-\frac{2}{n-1}}{n+2r},
\qquad
S_{\operatorname{vine}}(n;r)=\frac{2rn}{n+2r},
\qquad
S_{\operatorname{vine}}^{\operatorname{tail}}(n;r)=\frac{(1+r)n}{n+2r},
$$
so that $S_{\operatorname{onion}}\to1$, $S_{\operatorname{vine}}\to2r$, and $S_{\operatorname{vine}}^{\operatorname{tail}}\to1+r$ as $n\to\infty$.
\end{itemize}
\end{proposition}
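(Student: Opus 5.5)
Part (i) is a counting exercise, done sampler by sampler from the algorithmic descriptions, followed by the conversion to Gamma-equivalents (beta $=2$, chi-squared $=1$).

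\emph{Extended onion.} Row $2$ is initialized with one beta draw and no normal. Each row $i=3,\ldots,n$ uses $i-1$ normals for the direction $U_i$ and one beta for $R_i^2$. The normal count is therefore $\sum_{i=3}^n(i-1)=n(n-1)/2-1$, and there are $1+(n-2)=n-1$ betas, giving $2(n-1)$ Gamma-equivalents.

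\emph{Conventional C-vine.} It draws one symmetric-Beta variate per partial correlation, so there are $n(n-1)/2$ betas and $n(n-1)$ Gamma-equivalents.

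\emph{Tail representation.} Following Theorem~\ref{thm:vine_coupling}, each $P_{ij}$ is realized from a fresh normal $Z_{ij}$ and a freshly drawn tail $T_{ij}\sim\operatorname{Gamma}(\beta_j,2)$, i.e.\ one chi-squared. This gives $n(n-1)/2$ of each.

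\emph{Normalized Bartlett.} Algorithm~\ref{alg:bartlett} uses $\sum_{i=2}^n(i-1)=n(n-1)/2$ normals and one chi-squared per row, hence $n-1$ Gamma-equivalents.

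The only point needing care is the row-$2$ convention of the onion, which removes one normal. It must be taken literally from the stated implementation rather than from the row-wise description in Theorem~\ref{thm:coupling}.

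For (ii) I would substitute the counts into the four $\widetilde T$ expressions and divide by $c_N$, so everything is in units of $r$. Write $m=n(n-1)/2$. Then
$$\widetilde T_{\operatorname{Bartlett}}/c_N=m+(n-1)r=(n-1)(n+2r)/2.$$
For the onion,
$$\widetilde T_{\operatorname{onion}}/c_N=m-1+2(n-1)r=\tfrac{n-1}{2}\Bigl(n-\tfrac{2}{n-1}+4r\Bigr).$$
The common factor $(n-1)/2$ cancels in the ratio, giving the stated $S_{\operatorname{onion}}$. The step I would double-check most is this factoring of $-1$ as $-\tfrac{2}{n-1}\cdot\tfrac{n-1}{2}$. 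For the vine,
$$n(n-1)r\Big/\bigl[(n-1)(n+2r)/2\bigr]=2rn/(n+2r),$$
and similarly $m(1+r)$ divided by the same denominator gives $(1+r)n/(n+2r)$.

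The limits follow by dividing numerator and denominator by $n$ with $r$ fixed: $S_{\operatorname{onion}}\to1$, $S_{\operatorname{vine}}\to2r$, and $S^{\operatorname{tail}}_{\operatorname{vine}}\to1+r$. No step is a real obstacle; the risk is bookkeeping, which I would guard against by checking each formula at $n=2$.
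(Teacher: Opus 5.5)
Your proposal is correct and follows the paper's proof essentially step for step: the same sampler-by-sampler count, including the beta-only row-$2$ initialization of the onion, then the conversion to Gamma-equivalents and direct division of each cost by $\widetilde{T}_{\operatorname{Bartlett}}$. Your explicit factorizations $\widetilde{T}_{\operatorname{Bartlett}}/c_N=(n-1)(n+2r)/2$ and $\widetilde{T}_{\operatorname{onion}}/c_N=\tfrac{n-1}{2}\bigl(n-\tfrac{2}{n-1}+4r\bigr)$ are correct and simply spell out the paper's ``simplify'' step.
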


\begin{proof}
For (i): extended onion: row $2$ uses one beta draw and no normals; each row $i=3,\ldots,n$ uses one beta draw and $i-1$ normals for the direction, giving $\sum_{i=3}^n(i-1)=n(n-1)/2-1$ normals and $n-1$ beta draws. C-vine, conventional: one shifted symmetric beta draw per partial correlation, $n(n-1)/2$ in total, and every other operation is arithmetic. C-vine, tail representation: by Theorem~\ref{thm:vine_coupling}, each partial correlation is $Z/(Z^2+Y)^{1/2}$ with a fresh standard normal $Z$ (supplying both sign and $\chi^2_1$ numerator) and a fresh tail $Y\sim\operatorname{Gamma}(\beta_j,2)$, giving $n(n-1)/2$ normals and $n(n-1)/2$ Gamma draws. Algorithm~\ref{alg:bartlett}: each row $i=2,\ldots,n$ uses $i-1$ normals and one chi-squared draw, giving $n(n-1)/2$ normals and $n-1$ chi-squared draws. Row $1$ is deterministic in every case. Applying the accounting convention (beta $=2$, chi-squared $=1$ Gamma-equivalent) gives the counts in Table~\ref{tab:rng_ledger}. For (ii), divide each $\widetilde{T}$ by $\widetilde{T}_{\operatorname{Bartlett}}(n)$ and simplify; the limits follow.
\end{proof}

In words, relative to the onion the Bartlett route trades one extra normal for $n-1$ saved Gamma-equivalent draws; relative to the conventional vine implementation it saves $(n-1)^2$ Gamma-equivalent draws in aggregate, approximately two per partial correlation; and relative to the tail representation, whose normal count matches Algorithm~\ref{alg:bartlett} exactly, it reduces the Gamma draws from $n(n-1)/2$ to $n-1$. The two comparisons behave differently in dimension. Against the onion the saving is $O(n)$ against $O(n^2)$ shared work, so the advantage is most pronounced at small $n$. Against the vine the saving is proportional to the total work, so the advantage persists with dimension; in the cost model, the limiting ratios are approximately $6$ for the conventional implementation and $4$ for the tail representation when $r\approx3$ (Section~\ref{sec:numerical}). The constant is representation- and implementation-dependent; on the benchmarked platform, the replacement of normal draws by beta or Gamma draws produces the persistent advantage seen in Tables~\ref{tab:controlled_vine_thm} and~\ref{tab:controlled_vine}.

\begin{remark}
\label{rem:scope}
Note that ``Gamma-equivalent'' is an accounting device tied to the gamma-ratio representation: a beta sampler need not generate two Gamma variates, since rejection algorithms, transformations, and parameter-dependent branches are common, and a rejection method consumes a random number of deviates per draw. Proposition~\ref{prop:rng_ledger}(i) therefore compares distribution calls under a fixed representation, not PRNG invocations, random bits, or execution cost. The limits in Proposition~\ref{prop:rng_ledger}(ii) are likewise properties of the cost model, not of the implementations: the operations the model omits, product, transformation, memory, and tail accumulation, are themselves $O(n^2)$. The model captures the qualitative distinction between a vanishing onion advantage and a persistent vine advantage; $r$ is calibrated by the microbenchmarks of Section~\ref{sec:numerical}, which benchmark the representation directly and report the package-level comparison separately.
\end{remark}

\paragraph{In-place implementation.}
Algorithm~\ref{alg:bartlett} never forms the dense Wishart matrix $W$, and Proposition~\ref{prop:cholesky_density} shows that the correlation matrix $C=LL^\prime$ need not be formed either: the row-normalized Bartlett factor is the LKJ Cholesky factor. The construction therefore admits a fully in-place implementation, stated as Algorithm~\ref{alg:bartlett_inplace}. For each row $i$, the $i-1$ standard normals are written directly into $L_{i,1:i-1}$, one chi-squared variate is drawn, the squared row norm is accumulated, and the row is overwritten by its normalized values. The method requires $O(n^2)$ output storage and $O(1)$ additional scalar workspace, with no per-row heap allocation.

Because only the lower triangle is written, the storage contract must be explicit: the output must be either (a) a triangular or packed-triangular type whose upper part is ignored by construction (e.g., a \texttt{LowerTriangular} view or a \texttt{Cholesky} factorization object, the convention used by \texttt{Distributions.jl}); or (b) a dense array whose strictly upper triangle is zeroed, once, before or after the row loop. Algorithm~\ref{alg:bartlett_inplace} adopts convention (b) for concreteness; the zeroing is a one-time $O(n^2)$ pass that can be skipped when the caller reuses a buffer whose upper triangle is already zero, or when the result is wrapped in a triangular type.

\begin{algorithm}[H]
\caption{In-place normalized-Bartlett $\operatorname{LKJ}_n(\eta)$ Cholesky sampler}
\label{alg:bartlett_inplace}
\begin{algorithmic}[1]
\Require preallocated $n\times n$ array $L$; parameter $\eta>0$
\State $L_{ij}\gets0$ for all $i<j$ \Comment{skip if buffer's upper triangle is known to be zero, or if $L$ is a triangular type}
\State $L_{11}\gets1$
\For{$i=2,\ldots,n$}
  \State Fill $L_{i,1:i-1}$ with independent $N(0,1)$ draws \Comment{$i-1$ normals, written in place}
  \State $q\gets\sum_{j=1}^{i-1}L_{ij}^2$ \Comment{squared row norm}
  \State Draw $g\sim\chi^2_{n+2\eta-i}$
  \State $s\gets(q+g)^{1/2}$
  \State $L_{i,1:i-1}\gets L_{i,1:i-1}/s$ \Comment{overwrite row with normalized values}
  \State $L_{ii}\gets g^{1/2}/s$
\EndFor
\State \Return $L$ \Comment{$LL^\prime\sim\operatorname{LKJ}_n(\eta)$; neither $W$ nor $C$ is formed}
\end{algorithmic}
\end{algorithm}

\paragraph{Numerical behavior.}
The normalized-Bartlett form contains no subtraction. The diagonal entry is computed as $L_{ii}=g^{1/2}/s$ from nonnegative quantities, so it is computed to small relative error whenever the drawn variates are strictly positive and representable in the floating-point format. By contrast, the textbook onion step computes $L_{ii}=(1-R_i^2)^{1/2}$, which suffers cancellation when $R_i^2$ is close to one: an absolute error of order machine epsilon in $R_i^2$ becomes a relative error of order $\varepsilon/(1-R_i^2)$ in $1-R_i^2$. The regime $R_i^2\approx1$ arises precisely when $G_i$ is small relative to $Q_i$, which is most likely in the late rows when $\eta$ is small: for the last row, $G_n\sim\chi^2_{2\eta}$ concentrates near zero as $\eta\downarrow0$. The numerical advantage is not intrinsic to the onion or C-vine distributions. Under the gamma-ratio couplings, however, the subtraction-free implementations are exactly the normalized-Bartlett assignments: $L_{ii}=\{G_i/(Q_i+G_i)\}^{1/2}$ for the onion (Theorem~\ref{thm:coupling}), and the corresponding nested tail ratios for the vine (Theorem~\ref{thm:vine_coupling}). Within these representations the Bartlett formulation makes the stable computation immediate, whereas the textbook formulas, the current onion package implementation, and the conventional C-vine implementation examined here recover complements through $1-R_i^2$ or $1-\rho^2$, where cancellation can occur.

Two qualifications apply. First, for very small shape parameters the chi-squared draw itself can underflow to zero in finite precision, yielding $L_{nn}=0$ and a singular factor. The textbook computations face the mirror-image problem, $1-R_n^2$ rounding to zero, at a much larger threshold: under ideal, correctly rounded generation the Bartlett diagonal vanishes only when $G_n$ falls below roughly half the smallest positive subnormal value ($\approx10^{-324}$), whereas the subtractive forms lose $1-R_n^2$ once it falls below the rounding threshold at one, $2^{-54}\approx5.6\times10^{-17}$, since an exact $1-\delta$ rounds to unity below that point; realized thresholds can additionally depend on the internals of the Gamma and beta generators. The leading-order failure probabilities are $\Pr(G_n<t)\sim(t/2)^{\eta}/\Gamma(\eta+1)$ and $\Pr(1-R_n^2<\delta)\sim\delta^{\eta}/\{\eta B(\eta,(n-1)/2)\}$, the latter with an $n$-dependent constant; \xtab{tab:stress}{S1} compares these predictions with observed frequencies, which agree to about two digits. Second, the computed row satisfies $\sum_{j\leq i}L_{ij}^2=1$ up to a few units in the last place; the normalization is a single division by a computed norm, not an exact projection. In double precision both methods are adequate for routine use.

\section{Benchmarks}
\label{sec:numerical}

The benchmarks in this section are of two kinds, and they answer different questions. The \emph{controlled comparisons} isolate the mechanisms of the coupling theorems: for the onion (Theorem~\ref{thm:coupling}), two samplers written in the same style differing only in whether the first Gamma component of the beta radius is drawn or reused; for the vine (Theorem~\ref{thm:vine_coupling}), samplers identical except for whether each partial correlation's Gamma tail is drawn fresh or computed from the shared row sums, with the conventional two-Gamma symmetric-beta implementation benchmarked separately at the representation level. The \emph{package-level comparison} measures complete implementations, our samplers against \texttt{Distributions.jl}, and is presented as such: its speedups reflect loop structure, beta-sampler internals, allocation, and interface overhead in addition to the saved draws. The package-level comparison is confined to Julia, where \texttt{Distributions.jl} provides an onion implementation. Stan implements the partial-correlation route, and its Beta generator uses two Gamma draws per partial correlation, matching the Gamma-call count of Table~\ref{tab:rng_ledger}, although its small-shape branch uses an augmented log-space construction rather than the direct Gamma ratio. A direct timing comparison across Stan Math and Julia would confound the algorithmic difference with language, RNG-library, and interface effects; the vine comparisons are therefore kept controlled and representation-level.

\paragraph{Protocol.}
All benchmarks use \texttt{BenchmarkTools.jl} \citep{ChenRevels:2016} with fixed seeds. We report median times; interquartile ranges, minima, and allocation counts are recorded alongside every median in the repository output for comparability with common practice. Medians are used because part of the run-to-run variability is intrinsic (rejection-based Gamma and beta samplers consume a random number of deviates), so the minimum is not an unbiased proxy for typical cost. Interface parity is enforced in both directions: allocating calls are compared with allocating calls (both returning a Cholesky-factor object), and preallocated calls with preallocated calls (\texttt{rand!} for \texttt{Distributions.jl} against Algorithm~\ref{alg:bartlett_inplace} with the zeroing pass skipped, since \texttt{Distributions.jl} likewise writes only one triangle of its \texttt{Cholesky} output). Distribution objects are constructed outside the timed loops.

\begin{table}[t]
\centering
\caption{Controlled comparison, $\eta=1$: median time per draw (ns) for structurally identical samplers differing only in whether the first Gamma component of the beta radius is drawn independently (gamma-ratio onion) or reused as $Q_i=Z_i^\prime Z_i$ under the coupling of Theorem~\ref{thm:coupling}. Both samplers use the Gaussian-direction representation in every row, including row~2, and both deliberately retain the same onion arithmetic, including the $(1-R_i^2)^{1/2}$ diagonal assignment, so that the timing difference isolates the extra Gamma draw. Neither column is the direct normalized-Bartlett implementation of Algorithm~\ref{alg:bartlett}, which is benchmarked in Table~\ref{tab:benchmarks}. The RNG-only model column is $(n+4r)/(n+2r)$ with $r=3.0$ calibrated from the microbenchmarks. Full grid over $\eta$ and dispersion measures in the code repository.}
\label{tab:controlled}
\begin{tabular}{rrrrr}
\toprule
$n$ & \makecell{Gamma-ratio\\onion (ns)} & \makecell{Bartlett-coupled\\reuse (ns)} & Ratio & \makecell{RNG-only\\model} \\
\midrule
  3 &     64.9 &     37.8 & 1.72 & 1.67 \\
  5 &    157.8 &    111.3 & 1.42 & 1.55 \\
 10 &    423.5 &    329.1 & 1.29 & 1.38 \\
 20 &  1{,}233 &  1{,}028 & 1.20 & 1.23 \\
 50 &  6{,}217 &  5{,}667 & 1.10 & 1.11 \\
100 & 23{,}167 & 22{,}041 & 1.05 & 1.06 \\
200 & 88{,}667 & 86{,}417 & 1.03 & 1.03 \\
\bottomrule
\end{tabular}
\end{table}

\paragraph{Controlled comparison: onion.}
Table~\ref{tab:controlled} reports the experiment that isolates Theorem~\ref{thm:coupling}: both samplers draw the same normals and the same second Gamma component; the onion variant additionally draws the first Gamma component that the theorem shows to be redundant. Both use the Gaussian-direction representation in every nontrivial row, including row~2, so that the two code paths differ only by the extra Gamma component; this departs from the Beta-only row-2 initialization counted in Proposition~\ref{prop:rng_ledger}. Any timing difference is attributable to the $n-1$ saved Gamma draws, because everything else in the two code paths is identical. The RNG-only model predicts the ratio $(n+4r)/(n+2r)$, which is exact within the cost model for this Gaussian-direction variant, the row-$2$ correction in Proposition~\ref{prop:rng_ledger} not applying; with the microbenchmarked value $r\approx3.0$, the model tracks the observed ratios across the entire range of $n$, with maximum deviation about $0.13$ (at $n=5$) and agreement within $0.04$ for $n\geq20$. The mechanism of Theorem~\ref{thm:coupling}, priced by the ledger of Proposition~\ref{prop:rng_ledger}, thus accounts for the controlled timing differences essentially in full.

\paragraph{Controlled comparison: vine.}
Table~\ref{tab:controlled_vine_thm} reports the experiment that isolates Theorem~\ref{thm:vine_coupling}. Both samplers draw one standard normal per partial correlation and build the factor with the same product-accumulation code; the only difference is the source of each Gamma tail. The fresh variant draws a new $\operatorname{Gamma}(\beta_j,2)$ tail for every parameter (the tail representation of the theorem), while the shared-tail variant computes the nested tails from the shared row sums, consuming one chi-squared draw per row. For control, both columns form the Cholesky factor through the same C-vine product map; the shared-tail column is distributionally and pathwise equivalent to Algorithm~\ref{alg:bartlett} but is not the direct implementation of Algorithm~\ref{alg:bartlett}, and it is correspondingly slower than the direct Algorithm~\ref{alg:bartlett_inplace} implementation of Table~\ref{tab:benchmarks} ($123{,}083$~ns against $85{,}541$~ns at $n=200$). The timing difference between the two columns isolates the replacement of fresh per-parameter Gamma tails by row-level chi-squared draws and shared-tail accumulation, and the RNG-only model predicts a ratio approaching $1+r\approx4$. That model is a rough benchmark rather than a precise prediction, for the reason given in Remark~\ref{rem:scope}.
The observed ratios rise from $1.3\times$ at $n=3$ to a plateau of about $2.5$--$2.7\times$ for $n\geq50$: the persistence the ledger predicts is clearly visible, while the plateau sits well below the $1+r\approx4$ benchmark and does not approach it as $n$ grows, since the omitted per-entry arithmetic scales with the priced work rather than vanishing relative to it.

\begin{table}[t]
\centering
\caption{Controlled comparison for the vine coupling, $\eta=1$: median time per draw (ns) for samplers identical except for the source of each partial correlation's Gamma tail, drawn fresh per parameter (tail representation of Theorem~\ref{thm:vine_coupling}) or computed from the shared row sums (Bartlett-coupled shared tails). Both columns form the factor through the same C-vine product map. The RNG-only model column is $(1+r)n/(n+2r)$ with $r=3.0$ from the microbenchmarks.}
\label{tab:controlled_vine_thm}
\begin{tabular}{rrrrr}
\toprule
$n$ & \makecell{Fresh tails\\(ns)} & \makecell{Bartlett-coupled\\shared tails (ns)} & Ratio & \makecell{RNG-only\\model} \\
\midrule
  3 &     60.2 &     47.0 & 1.28 & 1.33 \\
  5 &    206.9 &    115.4 & 1.79 & 1.82 \\
 10 &    829.7 &    394.1 & 2.11 & 2.50 \\
 20 &  3{,}161 &  1{,}329 & 2.38 & 3.08 \\
 50 & 19{,}792 &  7{,}927 & 2.50 & 3.57 \\
100 & 80{,}125 & 30{,}292 & 2.65 & 3.77 \\
200 & 320{,}292 & 123{,}083 & 2.60 & 3.88 \\
\bottomrule
\end{tabular}
\end{table}

\paragraph{Representation-level comparison: conventional vine.}
Table~\ref{tab:controlled_vine} compares the conventional symmetric-beta implementation, which draws each partial correlation as $2X/(X+Y)-1$ with two fresh $\operatorname{Gamma}(\beta_j,2)$ variates, against the Bartlett-coupled shared-tail sampler with the same downstream code. This is a comparison of distribution-call representations rather than a direct implementation of the pathwise coupling: the two-Gamma decomposition here differs from the $\chi^2_1$-plus-tail decomposition of Theorem~\ref{thm:vine_coupling}, although the output laws coincide. The observed ratios rise from $1.8\times$ at $n=3$ and remain between roughly $4.8\times$ and $6.3\times$ for $n\geq20$; they support a persistent constant-factor advantage of roughly five to six for this implementation and are broadly consistent with the $2r\approx6$ limit of the RNG-only model, without confirming a precise limit. Deviations from that benchmark reach about $41\%$ at $n=5$ and $35\%$ at $n=10$, and lie within about $21\%$ for $n\geq20$, in either direction; they reflect accumulation, transformation, and memory costs outside the distribution-call ledger. The shared-tail columns of Tables~\ref{tab:controlled_vine_thm} and~\ref{tab:controlled_vine} time separately compiled instances of the same algorithm, so that each experiment remains internally paired; they agree within about $3\%$ for $n\geq20$, while at $n=5$ and $n=10$ a difference between the two compiled instances is visible at this timescale ($115$ against $102$~ns, and $394$ against $359$~ns), reproduces across seeds, and appears to reflect code-generation differences rather than sampling noise.

\begin{table}[t]
\centering
\caption{Representation-level comparison for the conventional C-vine implementation, $\eta=1$: median time per draw (ns) for the symmetric-beta gamma-ratio sampler ($2X/(X+Y)-1$, two fresh Gamma draws per partial correlation) against the Bartlett-coupled shared-tail sampler, identical downstream code. Both columns form the factor through the same C-vine product map. The RNG-only model column is $2rn/(n+2r)$ with $r=3.0$ from the microbenchmarks.}
\label{tab:controlled_vine}
\begin{tabular}{rrrrr}
\toprule
$n$ & \makecell{Gamma-ratio\\vine (ns)} & \makecell{Bartlett-coupled\\shared tails (ns)} & Ratio & \makecell{RNG-only\\model} \\
\midrule
  3 &     90.8 &     50.3 & 1.81 & 2.00 \\
  5 &    392.7 &    102.1 & 3.85 & 2.73 \\
 10 &  1{,}817 &    359.2 & 5.06 & 3.75 \\
 20 &  7{,}451 &  1{,}333 & 5.59 & 4.62 \\
 50 & 48{,}833 &  7{,}719 & 6.33 & 5.36 \\
100 & 152{,}042 & 30{,}208 & 5.03 & 5.66 \\
200 & 597{,}875 & 124{,}209 & 4.81 & 5.83 \\
\bottomrule
\end{tabular}
\end{table}

\paragraph{Microbenchmarks.}
To calibrate $r$ directly, we microbenchmark single scalar draws over the shape ranges that occur in the samplers. On the benchmark machine, a standard normal costs $c_N\approx3.8$~ns. A $\operatorname{Gamma}(\alpha,2)$ draw costs $\approx11$--$12$~ns for $\alpha>1$, $\approx6$~ns at $\alpha=1$ (the exponential fast path), and $\approx19$~ns for $\alpha<1$ (the small-shape branch); chi-squared draws behave identically at matching shapes. A $\operatorname{Beta}(\alpha,\beta)$ draw costs $\approx22$--$37$~ns depending on the shape pair. Taking $r=c_\Gamma/c_N\approx3.0$ for the moderate shapes that dominate the row loop gives the model column of Table~\ref{tab:controlled}; the small-$n$, small-$\eta$ corner is slightly more expensive because shapes below one trigger the slow branch. We deliberately do not back out cost ratios from the total execution times of the full samplers, since those totals include norm computations, scaling loops, dispatch, and allocation that the ledger does not model.

\paragraph{Package-level comparison.}
Table~\ref{tab:benchmarks} compares complete implementations: the \texttt{LKJCholesky} onion sampler in \texttt{Distributions.jl} against Algorithms~\ref{alg:bartlett} and~\ref{alg:bartlett_inplace}, across $n\in\{3,5,10,20,50,100,200\}$, with matched interfaces in both directions; these corroborate the low-dimensional advantage first reported by \citet{WangWuChu:2018}. All quantities in the table refer to $\eta=1$; the full grid over $\eta\in\{0.5,1,2,10\}$ is in the code repository. For $n\geq10$, timings are nearly identical across $\eta$ at fixed $n$, since $\eta$ does not alter the number of variates drawn; at the smallest dimensions the shape parameters $n+2\eta-i$ determine which branches of the underlying Gamma and beta samplers are triggered (see the microbenchmarks), so timings vary with $\eta$. The package-level speedups exceed those of the controlled comparison ($2.8\times$ against $1.3\times$ at $n=10$) because they also reflect implementation differences: beta-sampler internals, method dispatch, and per-call overhead in \texttt{Distributions.jl}. They should be read as a comparison of software, with Theorem~\ref{thm:coupling} explaining only the algorithmic component isolated in Table~\ref{tab:controlled}.

\begin{table}[t]
\centering
\caption{Package-level comparison at $\eta=1$: median time per draw (ns) and speedup relative to \texttt{Distributions.jl}, for matched interfaces: allocating calls returning a \texttt{Cholesky} object (left), and preallocated calls (\texttt{rand!} versus Algorithm~\ref{alg:bartlett_inplace}, right).
Benchmarks run in Julia~1.12.7 \citep{Bezanson:2017} (\texttt{Distributions.jl}~v0.25.129, \texttt{BenchmarkTools.jl}~v1.8.0 \citep{ChenRevels:2016})
on a 10-core Apple M1~Max (macOS, arm64), single thread, seed 19100618.}
\label{tab:benchmarks}
\begin{tabular}{rrrrrrr}
\toprule
& \multicolumn{3}{c}{Allocating} & \multicolumn{3}{c}{Preallocated} \\
\cmidrule(lr){2-4}\cmidrule(lr){5-7}
$n$ & Onion (ns) & Bartlett (ns) & Speedup & Onion (ns) & Bartlett (ns) & Speedup \\
\midrule
  3 &    178.9 &     54.0 & 3.31$\times$ &    166.1 &     37.3 & 4.45$\times$ \\
  5 &    395.8 &    123.6 & 3.20$\times$ &    386.5 &    106.3 & 3.64$\times$ \\
 10 &    996.4 &    355.0 & 2.81$\times$ &    944.0 &    318.2 & 2.97$\times$ \\
 20 &  2{,}389 &  1{,}096 & 2.18$\times$ &  2{,}375 &  1{,}030 & 2.31$\times$ \\
 50 & 10{,}250 &  5{,}722 & 1.79$\times$ & 10{,}250 &  5{,}611 & 1.83$\times$ \\
100 & 34{,}458 & 22{,}167 & 1.55$\times$ & 34{,}000 & 21{,}709 & 1.57$\times$ \\
200 & 124{,}292 & 86{,}000 & 1.45$\times$ & 123{,}250 & 85{,}541 & 1.44$\times$ \\
\bottomrule
\end{tabular}
\end{table}

\paragraph{Code availability.}
Julia code reproducing all benchmarks and tests, including \texttt{Project.toml} and \texttt{Manifest.toml}, the exact scripts generating each table, RNG types and seeds, and machine and thread settings, is available at \repourl. The numbers reported here correspond to commit \repocommit, released as tag \repotag{} and archived at \repodoi.

\paragraph{Implementation tests.}
All implementations are subjected to the regression and stress tests reported in \testslocation: pathwise coupling tests for both theorems, exact-law Kolmogorov--Smirnov tests of the row radius law, moment checks with Monte Carlo standard errors, separate regression tests for each of the three vine code paths, and stress tests at extreme parameters that confirm the underflow-threshold asymmetry of Section~\ref{sec:ledger} at the predicted rates.

\section{Conclusion}\label{sec:conclusion}

The two classical samplers for the LKJ distribution, the extended onion and the partial correlation C-vine of \citet{LewandowskiKurowickaJoe:2009}, admit exact pathwise realizations from a third, simpler object: the row-normalized Bartlett factor of a restricted Wishart matrix \citep{WangWuChu:2018}. Theorem~\ref{thm:coupling} couples the Bartlett row to the onion's radius--direction pair; Theorem~\ref{thm:vine_coupling} couples the same row to the full set of vine partial correlations, each recovered as a normal deviate divided by the norm of the remaining tail, with exactly the independent symmetric-Beta laws of the C-vine. One Gaussian vector and one chi-squared variate per row therefore carry all the Gamma-distributed information that either classical construction consumes: the onion regenerates one discarded Gamma component per row; under the tail representation the vine regenerates one fresh Gamma tail per partial correlation, while under the conventional symmetric-Beta representation it instead draws two Gamma components per parameter. The excess depends on the vine representation. Under the tail representation of Theorem~\ref{thm:vine_coupling} the nested tails of the theorem are literally regenerated, at an aggregate excess of $n(n-1)/2-(n-1)=(n-1)(n-2)/2$ fresh Gamma tails; the conventional symmetric-Beta implementation does not regenerate those nested tails, but draws two Gammas per parameter, at an aggregate excess of $n(n-1)-(n-1)=(n-1)^2$ Gamma-equivalent calls. Proposition~\ref{prop:rng_ledger} prices the redundancies, $n-1$ against $2(n-1)$ against $n(n-1)$ Gamma-equivalent draws under gamma-ratio accounting, and Proposition~\ref{prop:cholesky_density} derives the (known) Cholesky-factor density directly from the Bartlett rows, exposing the independent row structure on which both couplings rest.

Section~\ref{sec:characterizations} adds a variational reading of the family: relative to flat Lebesgue measure in the $n(n-1)/2$ free off-diagonal coordinates, $\operatorname{LKJ}_n(\eta)$ is the maximum-entropy distribution at fixed $\mathbb{E}[\log\det C]$, so the natural parameter $\eta-1$ is the Lagrange multiplier dual to that moment.

The practical consequences are three. The controlled benchmarks confirm the distinction the ledger predicts, between an onion advantage that vanishes in $n$ ($1.7\times$ at $n=3$) and a vine advantage that persists as a constant factor, roughly five to six for the conventional symmetric-beta implementation and about $2.5$--$2.7$ in the tail-representation experiment. The measured constants differ from the limiting ratios of the RNG-only cost model because the arithmetic omitted by that model is itself $O(n^2)$: the tail-representation plateau lies below its model benchmark, whereas the conventional-vine ratios fluctuate around theirs. Package-level speedups against \texttt{Distributions.jl} are larger, roughly $1.5\times$--$3.3\times$ for allocating calls and up to $4.5\times$ for preallocated calls, reflecting implementation overhead in addition to the saved draws. Finally, the direct Bartlett implementation is the most robust among the three implementations examined at small $\eta$: it contains no subtraction, and its diagonal entries vanish only near the subnormal threshold, against machine-epsilon thresholds for the textbook computations of $(1-R^2)^{1/2}$. The sampler is valid for all real $\eta>0$, requires only standard normal and chi-squared variates, and supports an allocation-free in-place interface (Algorithm~\ref{alg:bartlett_inplace}).

\ifincludetests
\appendix

\section{Implementation Tests}
\label{app:tests}

The distributional identities among the three samplers are proved exactly (\xthm{thm:coupling}{1} and \xthm{thm:vine_coupling}{2}), so the checks reported here test the implementation, not the theorems; they are regression tests, and agreement is the expected outcome. Large Kolmogorov--Smirnov $p$-values in particular are not evidence of equality and a correct implementation will occasionally produce small ones.

\paragraph{Pathwise coupling test.}
The most stringent implementation test follows directly from part~(iii) of \xthm{thm:coupling}{1}: feed the same realized $(Z_i,G_i)$ to both formulations, the assignments in \xalg{alg:bartlett}{1} and the onion assignments with $(X_i,Y_i)=(Q_i,G_i)$, and verify that every entry of the factor agrees to rounding error. Over $1{,}000$ replications of all rows at each of $(n,\eta)\in\{(5,1),(50,1),(50,0.7),(200,10)\}$, the maximum absolute discrepancy is $7.3\times10^{-14}$. At $(n,\eta)=(5,0.1)$ it rises to $9.0\times10^{-9}$, and the increase is itself informative: it is entirely the cancellation error of the onion-side assignment $(1-R_i^2)^{1/2}$, whose absolute error is of order $\varepsilon/L_{nn}$ when $G_n$ is tiny (\xsec{sec:ledger}{7}), while the Bartlett-side assignment $G_n^{1/2}/S_n$ remains accurate to a few ulps on the same draws. The coupling test thus doubles as a direct witness of the stability analysis. The analogous test for \xthm{thm:vine_coupling}{2} feeds the same realized $(Z_i,G_i)$ to the vine construction in \xeq{eq:vine_cholesky}{1} with $\rho_{ji;1\cdots j-1}=P_{ij}$ and to \xalg{alg:bartlett}{1} directly. The maximum discrepancy over the same benign configurations is $2.7\times10^{-15}$, rising to $8.8\times10^{-9}$ at $(n,\eta)=(5,0.1)$; the increase again reflects the $(1-\rho^2)^{1/2}$ cancellation, which the vine construction shares with the onion.

\paragraph{Row-level radius tests.}
For early, middle, and final rows ($i=2,\lceil n/2\rceil,n$) we test the empirical row law against its exact Beta distribution by one-sample Kolmogorov--Smirnov tests, including noninteger and small shapes ($\eta\in\{0.1,0.7\}$). The primary test is on the squared diagonal, $L_{ii}^2=1-R_i^2\sim\operatorname{Beta}(\eta+(n-i)/2,(i-1)/2)$; all twelve configurations pass, with $p$ between $0.08$ and $0.93$. The complementary test on $R_i^2$ itself agrees everywhere except the final row at $(n,\eta)=(10,0.1)$, where it returns $p\approx0$ while the $L_{ii}^2$ test of the same draws gives $p=0.13$. The disagreement is double-precision saturation, not a sampler discrepancy: in that configuration $222$ of $20{,}000$ draws ($1.1\%$) have $1-R_i^2$ small enough that the computed $R_i^2$ rounds to exactly one, so the empirical distribution acquires an atom at $1$ that the continuous reference law does not have, whereas $L_{ii}^2$ represents the same information down to the subnormal range. The asymmetry between the two parametrizations of one quantity is the same representational advantage documented in the stress tests below.

\paragraph{Moment checks with Monte Carlo uncertainty.}
Under $\operatorname{LKJ}_n(\eta)$, $\operatorname{var}(C_{ij})=1/(n+2\eta-1)$ for $i\neq j$ \citep[Proposition~1]{LewandowskiKurowickaJoe:2009}, and from the Wishart representation with $\nu=n+2\eta-1$,
$$
\mathbb{E}[\log\det C]=\sum_{i=1}^n\psi\left(\frac{\nu-i+1}{2}\right)-n\psi\left(\frac{\nu}{2}\right),
$$
where $\psi$ is the digamma function; see \citet{Hansen:CorrelationMatrix} for the large-$n$ expansion and the associated central limit theorem in the uniform case. Table~\ref{tab:validation} compares empirical values from both samplers with these closed forms, based on $N=50{,}000$ draws per configuration, with Monte Carlo standard errors. All empirical values lie within $3$ standard errors of theory, and the two-sample Kolmogorov--Smirnov $p$-values on the $C_{12}$ marginal are unremarkable, lying between $0.25$ and $0.97$.

\paragraph{Vine implementation tests.}
Three vine code paths appear in this paper and each is tested separately. The Bartlett-coupled shared-tail path is covered by the pathwise coupling test above, which verifies entrywise agreement with \xalg{alg:bartlett}{1} on shared inputs; being pathwise, that test subsumes any distributional check. The two independent-input paths require their own regression tests, since neither is a deterministic function of a Bartlett row.

The conventional symmetric-Beta gamma-ratio vine sampler, the one benchmarked in \xtab{tab:controlled_vine}{4} and stress-tested in Table~\ref{tab:stress} below, is validated in the same way as the Bartlett and onion samplers. Across the configurations $(n,\eta)\in\{(5,1),(10,2),(20,0.5),(10,0.7)\}$ with $N=10{,}000$ draws, all empirical values of $\operatorname{var}(C_{12})$ and $\mathbb{E}[\log\det C]$ lie within $1.5$ Monte Carlo standard errors of their theoretical values, and two-sample Kolmogorov--Smirnov tests of the $C_{12}$ marginal give $p$-values between $0.12$ and $0.90$ against both the Bartlett and the \texttt{Distributions.jl} implementations. The fresh-tail sampler of \xtab{tab:controlled_vine_thm}{3}, which realizes the tail representation of \xthm{thm:vine_coupling}{2} with independently drawn tails, is validated by the same battery over the same configurations; the corresponding output is in the repository.

\paragraph{Stress tests at extreme parameters.}
For $\eta\in\{10^{-6},10^{-3},0.1\}$, Table~\ref{tab:stress} records the frequency of an exactly zero diagonal entry for all three samplers, together with the predictions of \xsec{sec:ledger}{7} evaluated at the ideal rounding thresholds: $\Pr(G_n<t)\sim(t/2)^{\eta}/\Gamma(\eta+1)$ with $t\approx2.5\times10^{-324}$ for Bartlett; $\Pr(1-R_n^2<\delta)\sim\delta^{\eta}/\{\eta B(\eta,(n-1)/2)\}$ with $\delta=2^{-54}\approx5.6\times10^{-17}$ for the onion, this being the point below which an exact $1-\delta$ rounds to unity rather than the spacing $2^{-53}$ below one; and the corresponding two-sided Beta tail for the conventional vine implementation, whose final tree has the $n$-free law $\operatorname{Beta}(\eta,\eta)$. These are leading-order tail approximations with exact constants, not exact probabilities. Observed and predicted frequencies agree to about two digits throughout, including the $n$-dependence of the onion rate, which enters through the constant $\eta B(\eta,(n-1)/2)$. The vine prediction is the loosest of the three: its partial correlation passes through the affine map $2V-1$, a squaring, and a product accumulation, so its effective zero threshold is not the single subtraction at one that governs the onion. The $\Pr(\mathrm{NaN})$ column likewise describes the conventional gamma-ratio vine implementation benchmarked here, in which both $\operatorname{Gamma}(\eta,2)$ components of a symmetric-Beta draw can underflow and yield $0/0$. It is not a property of the C-vine construction itself, nor of Stan Math, whose small-shape Beta branch works in log space and so avoids that particular failure.

The conventional vine implementation has a second failure mode that the other two samplers do not: both Gamma components of a ratio can underflow to zero, producing an invalid (NaN) partial correlation. Because a NaN draw carries no diagonal entry to test, the zero-diagonal frequencies are necessarily conditional on a valid draw, and Table~\ref{tab:stress} therefore reports $\Pr(\mathrm{NaN})$, the number of valid draws, the conditional zero-diagonal frequency, and the unconditional probability of either failure. The distinction matters for the vine. At $\eta=10^{-3}$ its conditional zero-diagonal frequency is $0.964$, indistinguishable from the onion's, but $22\%$ of draws are invalid, so the unconditional failure probability is $1-(1-0.225)(1-0.964)\approx0.972$; the predicted NaN rate is $0.476^2\approx0.23$. At $\eta=10^{-6}$ essentially every vine draw is invalid, leaving $144$ valid draws out of $10^5$ at $n=5$ and $12$ out of $10^4$ at $n=50$, so the conditional frequencies in that row carry almost no information and are reported only for completeness.

Setting the vine's NaNs aside, the zero-diagonal thresholds differ by many orders of magnitude: at $\eta=10^{-3}$ the Bartlett sampler produces a zero diagonal in $47\%$ of draws against $96$--$97\%$ for the onion, and at $\eta=0.1$ Bartlett produced none in $10^5$ draws (minimum diagonal $\approx10^{-28}$, predicted failure probability $\approx10^{-32}$) while the onion and conventional vine produced exact zeros in about $3\%$ and $2.3\%$ of draws respectively. At $\eta=10^{-6}$ all samplers fail in essentially every draw; that regime is not reliably representable in ordinary double-precision Cholesky coordinates. Row-norm errors $\max_i|\sum_{j\leq i}L_{ij}^2-1|$ never exceeded $2.0\times10^{-15}$ for any sampler, consistent with the few-ulps claim of \xsec{sec:ledger}{7}.

We define numerical non-positive-definiteness of the \emph{formed product} $LL^\prime$ as failure of its double-precision Cholesky factorization, that is, a nonpositive computed pivot. At $\eta=0.1$ the formed products fail for about $2\%$ of draws at $n=5$ and about $3\%$ at $n=50$, at nearly identical rates for the three samplers, which locates that failure in the formation of $C$ rather than in either factor, a further argument for working with the factor directly.
Every negative computed eigenvalue of $LL^\prime$, across all configurations and samplers, exceeded $-1.5\times10^{-15}$, and the frequencies of $\lambda_{\min}<-10^{-14}n$ and $\lambda_{\min}<-10^{-12}n$ were exactly zero. The failures are therefore roundoff-scale artifacts of forming the product, not material indefiniteness of either factor.

\begin{table}[t]
\centering
\caption{Stress tests at extreme parameters. $N=10^5$ draws for $n=5$, $N=10^4$ for $n=50$. $\Pr(\mathrm{NaN})$ is the unconditional frequency of an invalid factor; $N_{\mathrm{val}}$ is the resulting number of valid draws; the observed and predicted zero-diagonal frequencies are conditional on a valid draw; and $\Pr(\text{fail})=1-\{1-\Pr(\mathrm{NaN})\}\{1-\Pr(0\mid\text{val})\}$ is the unconditional frequency of either failure mode. Predictions are the leading-order tail approximations with exact constants given in \xsec{sec:ledger}{7}, evaluated at the ideal rounding thresholds. Realized thresholds can also depend on generator internals.}
\label{tab:stress}
\small
\begin{tabular}{rr l rrrrrr}
\toprule
$\eta$ & $n$ & Sampler & $\Pr(\mathrm{NaN})$ & $N_{\mathrm{val}}$ & \makecell{Observed\\$\Pr(0\mid\text{val})$} & \makecell{Predicted\\$\Pr(0\mid\text{val})$} & $\Pr(\text{fail})$ & Min.\ diagonal \\
\midrule
$10^{-6}$ &  5 & Bartlett & 0     & 100{,}000 & 0.999 & 0.999      & 0.999 & 0 \\
          &    & Onion    & 0     & 100{,}000 & 1.000 & 1.000      & 1.000 & 0 \\
          &    & Vine     & 0.999 & 144       & 1.000 & 1.000      & 1.000 & 0 \\
$10^{-6}$ & 50 & Bartlett & 0     & 10{,}000  & 0.999 & 0.999      & 0.999 & 0 \\
          &    & Onion    & 0     & 10{,}000  & 1.000 & 1.000      & 1.000 & 0 \\
          &    & Vine     & 0.999 & 12        & 1.000 & 1.000      & 1.000 & 0 \\
$10^{-3}$ &  5 & Bartlett & 0     & 100{,}000 & 0.476 & 0.475      & 0.476 & 0 \\
          &    & Onion    & 0     & 100{,}000 & 0.965 & 0.964      & 0.965 & 0 \\
          &    & Vine     & 0.225 & 77{,}521  & 0.964 & 0.963      & 0.972 & 0 \\
$10^{-3}$ & 50 & Bartlett & 0     & 10{,}000  & 0.470 & 0.475      & 0.470 & 0 \\
          &    & Onion    & 0     & 10{,}000  & 0.966 & 0.967      & 0.966 & 0 \\
          &    & Vine     & 0.226 & 7{,}744   & 0.963 & 0.963      & 0.971 & 0 \\
$0.1$     &  5 & Bartlett & 0     & 100{,}000 & 0     & $10^{-32}$ & 0     & $2.4\times10^{-28}$ \\
          &    & Onion    & 0     & 100{,}000 & 0.027 & 0.026      & 0.027 & 0 \\
          &    & Vine     & 0     & 100{,}000 & 0.023 & 0.024      & 0.023 & 0 \\
$0.1$     & 50 & Bartlett & 0     & 10{,}000  & 0     & $10^{-32}$ & 0     & $5.0\times10^{-23}$ \\
          &    & Onion    & 0     & 10{,}000  & 0.035 & 0.034      & 0.035 & 0 \\
          &    & Vine     & 0     & 10{,}000  & 0.029 & 0.024      & 0.029 & 0 \\
\bottomrule
\end{tabular}
\end{table}

\begin{table}[t]
\centering
\caption{Regression tests of the implementations ($N=50{,}000$ draws per configuration).
Var denotes the variance of $C_{12}$ and ELD the mean of $\log\det C$;
``th'' is the theoretical value, ``Ba'' and ``On'' are empirical values from the Bartlett and onion samplers, with Monte Carlo standard errors in parentheses.
KS $p$ is the $p$-value from a two-sample Kolmogorov--Smirnov test on the $C_{12}$ marginal; since the distributional identity is proved, these are checks on the implementations only.}
\label{tab:validation}
\small
\begin{tabular}{rr rrr rrr r}
\toprule
$n$ & $\eta$
  & Var (th) & Var (Ba) & Var (On)
  & ELD (th) & ELD (Ba) & ELD (On)
  & KS $p$ \\
\midrule
  5 & 1.0 & 0.1667 & 0.1677 (8)  & 0.1653 (8)  & $-3.106$ & $-3.099$ (7)  & $-3.094$ (7)  & 0.713 \\
 10 & 2.0 & 0.0769 & 0.0773 (4)  & 0.0770 (4)  & $-5.516$ & $-5.519$ (5)  & $-5.500$ (5)  & 0.970 \\
 10 & 0.7 & 0.0962 & 0.0972 (5)  & 0.0966 (5)  & $-8.990$ & $-8.985$ (10) & $-8.992$ (10) & 0.744 \\
 20 & 0.5 & 0.0500 & 0.0506 (3)  & 0.0503 (3)  & $-20.621$ & $-20.613$ (14) & $-20.619$ (14) & 0.508 \\
 50 & 1.0 & 0.0196 & 0.0196 (1)  & 0.0196 (1)  & $-46.914$ & $-46.923$ (11) & $-46.915$ (11) & 0.254 \\
\bottomrule
\end{tabular}
\par\smallskip
{\footnotesize Standard errors in units of the last displayed digit for Var, and of $10^{-3}$ for ELD.}
\end{table}

\fi

\FloatBarrier
\clearpage

\section*{Declarations}

\paragraph{Funding.}
This research did not receive any specific grant from funding agencies in the public, commercial, or not-for-profit sectors.

\paragraph{Declaration of competing interest.}
The author declares that he has no known competing financial interests or personal relationships that could have appeared to influence the work reported in this paper.

\paragraph{Declaration of generative AI and AI-assisted technologies in the manuscript preparation process.}
Generative AI tools, including Claude (Anthropic) and ChatGPT (OpenAI), were used to review and copy-edit the manuscript, assist with implementing the simulation designs, reorganize the presentation of results, and prepare the supplementary material. The author verified all mathematical statements, proofs, numerical results, and conclusions and takes full responsibility for the final content.

\paragraph{Data and code availability.}
This paper reports no empirical data. The Julia code that generates every table and test reported in the paper and in \testslocation, together with the environment files that pin exact package versions, is archived and citable; see the Code availability paragraph in Section~\ref{sec:numerical}.

\ifincludetests\else
\paragraph{Supplementary material.}
Supplementary material accompanying this article contains the implementation tests listed in the Implementation tests paragraph of Section~\ref{sec:numerical} (Tables~S1 and~S2).
\fi

\section*{Acknowledgements}

I thank S{\o}ren Johansen for drawing my attention to Barndorff-Nielsen and Schou (1973).

\end{document}